\newtheorem{thm}{Theorem}[section]
\newtheorem{lem}[thm]{Lemma}
\newtheorem{prop}[thm]{Proposition}
\newtheorem{cor}[thm]{Corollary}
\theoremstyle{definition}
\newtheorem{rem}{Remark}[section]
\newtheorem{hyp}{Hypothesis}[section]
\numberwithin{equation}{section}
\newcommand\disp{\displaystyle}
\newcommand{\dB}{\ensuremath{\mathbb{B}}}
\newcommand{\dF}{\ensuremath{\mathbb{F}}}
\newcommand{\I}{\ensuremath{\mathbb{I}}}
\newcommand{\dR}{\ensuremath{\mathbb{R}}}
\newcommand{\dZ}{\ensuremath{\mathbb{Z}}}
\newcommand{\cA}{\ensuremath{\mathcal{A}}}
\newcommand{\cC}{\ensuremath{\mathcal{C}}}
\newcommand{\cF}{\ensuremath{\mathcal{F}}}
\newcommand{\cJ}{\ensuremath{\mathcal{J}}}
\newcommand{\cU}{\ensuremath{\mathcal{U}}}
\newcommand{\cV}{\ensuremath{\mathcal{V}}}
\newcommand{\cZ}{\ensuremath{\mathcal{Z}}}
\newcommand\nt{{\lfloor n t \rfloor}}
\newcommand\sn{n^{\frac{1}{2}}\; }
\newcommand\sni{n^{-\frac{1}{2}}\; }
\newcommand\setd{\{1,\ldots,d\}}
\newcommand{\BB}[1]{\textcolor{black}{#1}}
    \thanks{Partial funding in support of this work was provided by the Natural
Sciences and Engineering Research Council of Canada.}
\begin{document}


\section{Introduction}

Many central limit theorems involving a sequence of martingales are about weak convergence to a Brownian motion, and often they are obtained by requesting that the associated sequence of compensators converge in probability to a deterministic function. There are also some results where the sequence of compensators converge in probability to a stochastic process, and then the limit can be written as a mixture of a Brownian motion independent of the limiting compensator. Such results appear naturally for occupation times of symmetric random walks \citep{Csaki/Foldes:1998, Csaki/Foldes:2000} through complicated constructions of related strong approximations.
Our goal here is to gather, under the umbrella of a single martingale central limit theorem (CLT), a number of weak convergence results
for sequences of real-valued stochastic processes to mixtures of Brownian motions and independent increasing processes, without relying to special properties of the processes, nor involving complicated constructions. In fact, under  weak conditions involving sequence of martingale compensators,
we will  show that the CLTs  holds for general martingales, and that the limiting process is a mixture of a Brownian motion with the limiting compensator of the sequence of martingales, and both processes are independent. These conditions are easy to verify and are general enough to be applicable to a wide range of situations.

The main results are stated in Section \ref{sec:main}, while some examples involving either independent and identically distributed (iid) sequences or Markov processes,
are found in Section \ref{sec:xmpls}. A longer application regarding volatility modeling in mathematical finance
is in Section \ref{sec:volatility_stats}, extending previous results of \cite{Barndorff-Nielsen/Shephard:2002, Barndorff-Nielsen/Shephard:2003a}.

\section{Main results}\label{sec:main}
Let $D= D[0,\infty)$ be the Polish space of $\dR^d$-valued c\`adl\`ag trajectories (right continuous with left limits everywhere),
equipped with the Skorohod's $\cJ_1$-topology on $D$ ---
consult either \citet{Ethier/Kurtz:1986} or \citet{Jacod/Shiryaev:2003} for additional insight, as well as any unexplained terminology.
All processes considered here have their trajectories in $D$ and are adapted to a filtration $\dF = (\cF_t)_{t\ge 0}$ on a probability space
$(\Omega,\cF,P)$ satisfying the usual conditions (notably, right continuity of the filtration and completeness).
Trajectories in $D$ are usually noted $x(t)$ but occasionally $x_t$.
Weak convergence of a sequence of $D$-valued processes $X_n$ to another such process $X$, this last with continuous trajectories,
will be denoted by $X_n \stackrel{\mathcal{C}}{\rightsquigarrow} X$, while the weaker convergence of finite dimensional distributions will be denoted by
$X_n \stackrel{f.d.d}{\rightsquigarrow} X$. In this paper the limit $X$ always has continuous trajectories
(even though the sequence of processes $X_n$ may well not) so weak convergence in the $\cJ_1$-topology coincides with that in the
$\mathcal{C}$-topology, induced by the supremum norm over compact time sets. We will refer to $\mathcal{C}$-tightness etc. without further ado.
All processes are written in coordinatewise fashion such as $X_n=(X_n^i)_{1\le i\le d}$.
Writing $|\cdot|$ for the Euclidean norm, square integrable processes $X$ are those satisfying $E\{|X(t)|^2\}<\infty$ for every $t\ge0$, while
$L_2$-bounded ones also satisfy $\sup_{t\ge0}E\{|X(t)|^2\}<\infty$.

Suppose that $M_n$ is a sequence of $D$-valued square integrable $\mathbb{F}$-martingales started at $M_n(0)=0$.
Because of the discontinuity of trajectories, the (matrix-valued or cross) quadratic variation
$[M_n]:=([M_n^i,M_n^j])_{1\le i, j\le d}$ is distinct from its matrix-valued (predictable) compensator
$A_n:=\langle M_n \rangle $ --- another writing economy which means
$(A_n^{ij})_{1\le i, j\le d}:=(\langle M_n^i,M_n^j  \rangle)_{1\le i, j\le d}$ ---
and it is the latter that is of interest from a practical point of view.
Coordinatewise, we use $[M_n^i]=[M_n^i,M_n^i]$ and $\langle M_n^i \rangle = \langle M_n^i,M_n^i \rangle $ as well.
The largest jump of $X\in D$ over $[0,t]$ is denoted by $J_t(X) := \sup_{s\in [0,t]}|X(s)-X(s-)|$.
The following assumption will be used repeatedly.

\begin{hyp}\label{hyp:an_unbounded}
All of the following hold:
\begin{enumerate}
\item[(a)] $A_n^{ii}(t) = \langle M_n^i \rangle_t \to\infty$ as $t\to\infty$ almost surely, for each fixed $n\ge1$ and $i\in\{1,2,\ldots,d\}$;
\item[(b)] There is a $D$-valued process $A$ such that \\
(i) $A_n \stackrel{f.d.d.}{\rightsquigarrow} A$ and $A^{ij}=0$ for all $i\neq j$; \\
(ii) for all $t\ge 0$ and $i$, $\lim_n E\left\{A^{ii}_n(t)\right\}=E\left\{A^{ii}(t)\right\} < \infty$; \\
(iii) for all $i$, $A^{ii}(t) \to\infty$ as $t\to\infty$ almost surely.
\end{enumerate}
\end{hyp}
Writing the inverse process for $A_n^{ii}$ as $\tau_n^i(s) = \inf\{t\ge0; A_n^{ii}(t)>s\}$,
one defines the rescaled $\mathbb{F}_{\tau_n^i}$-martingale $W_n^i = M_n^i\circ \tau_n^i$, with compensator $A_n^{ii}\circ \tau_n^i$.
Note that by definition and using the right-continuity of $A_n^{ii}$, $A_n^{ii}\circ \tau_n^i(t)\ge t$.
Actually, $W_n^i$ is an $\mathbb{F}_{\tau_n^i}$-Brownian motion
with respect to the filtration $\mathbb{F}_{\tau_n^i}=\{\cF_{\tau_n^i(t)}\}_{t\ge 0}$,
whenever Hypothesis \ref{hyp:an_unbounded}.a holds and $M_n^i$ is continuous everywhere,
by \citet{Dambis:1965} or \citet{Dubins/Schwarz:1965}. In the latter case, the continuity of $M_n^i$
implies the continuity of both $[M_n^i]$ and $\langle M_n^i \rangle$,
as well as their equality $[M_n^i]=\langle M_n^i \rangle$. In this special case, the sequence $M_n$ therefore
comprises a naturally associated sequence of Brownian motions $W_n^i$ coordinatewise.
This is no longer the case as soon as at least one of the $M_n^i$'s has a discontinuity anywhere.
Obtaining a CLT therefore requires building such a Brownian motion,
possibly on an enlargement of the stochastic basis $(\Omega,\cF,\dF, P)$ with $\dF = (\cF_t)_{t\ge 0}$.
Such enlargements will be used systematically in this paper and are understood to affect some statements implicitly,
without further ado, for instance in some of the proofs.
Equality in law is denoted by $\stackrel{Law}{=}$;
convergence in probability is denoted by $\stackrel{Pr}{\to}$, in law by $\stackrel{Law}{\to}$, and
almost sure convergence by $\stackrel{a.s.}{\to}$.

\begin{thm}\label{thm:clt1}
Assume that Hypothesis \ref{hyp:an_unbounded} holds with $A$ continuous everywhere; that
$J_t(M_n) \stackrel{Law}{\to} 0$ for any $t>0$; that there exists an
$\dF$-adapted sequence of $D$-valued square integrable martingales $B_n$ started at $B_n(0)=0$ so that
\begin{enumerate}
\item $(A_n, B_n) \stackrel{\mathcal{C}}{\rightsquigarrow} (A,B)$ holds, where $B$ is a Brownian motion
with respect to its natural filtration $\dF_B = \{\cF_{B,t}: \; t\ge 0\}$ and $A$ is $\dF_B$-measurable;
\item  $\langle M_n^i, B_n^j \rangle_t \stackrel{Law}{\to} 0$, for any $i,j\in\setd$ and $t\ge 0$.
\end{enumerate}
Then $(M_n, A_n,B_n) \stackrel{\mathcal{C}}{\rightsquigarrow} (M,A,B)$ holds,
where $M$ is a continuous square integrable ${\cF}_{t}$-mar\-tin\-ga\-le with respect to (enlarged) filtration
with predictable quadratic variation process $A$.
Moreover, $M^i = W^i\circ A^{ii}$, $i\in \setd$ holds,
with $W$ a standard Brownian motion which is independent of $B$ and $A$.
\end{thm}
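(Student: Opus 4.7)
The plan is a three-stage argument: $\mathcal{C}$-tightness of the triple $(M_n,A_n,B_n)$, identification of any $\mathcal{C}$-subsequential limit via its quadratic covariation structure, and an application of the multidimensional Dambis--Dubins--Schwarz/Knight time-change theorem to promote orthogonality into independence.

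For tightness, since $(A_n,B_n)\stackrel{\mathcal{C}}{\rightsquigarrow}(A,B)$ is already assumed, only $M_n$ needs attention. Hypothesis \ref{hyp:an_unbounded}(b)(ii) bounds $E\{(M_n^i(t))^2\}=E\{A_n^{ii}(t)\}$ uniformly on compact time-sets, so Doob's $L^2$-inequality controls $\sup_{s\le t}|M_n^i(s)|$ in $L^2$. Combined with the $\mathcal{C}$-tightness of the compensator $A_n^{ii}$ and the vanishing-jump hypothesis $J_t(M_n)\stackrel{Law}{\to}0$, a Rebolledo--Aldous-type criterion (\citet[VI.4.13]{Jacod/Shiryaev:2003}) delivers $\mathcal{C}$-tightness of each $M_n^i$, hence of the triple.

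For any $\mathcal{C}$-accumulation point $(M,A,B)$, possibly on an enlarged basis, I would show that $(M,B)$ is a continuous local martingale with $\langle M^i,M^j\rangle=\delta_{ij}A^{ii}$, $\langle M^i,B^j\rangle=0$, and $\langle B^i,B^j\rangle=\delta_{ij}t$. The uniform $L^2$-bounds supplied by Hypothesis \ref{hyp:an_unbounded}(b)(ii) let me promote the in-probability convergences of the various compensators to joint convergence with $(M_n,B_n)$ in the $\mathcal{C}$-topology; applying standard bracket-identification results (e.g.~\citet[IX.1.17]{Jacod/Shiryaev:2003}) then yields the three bracket identities above. This gives $(M,B)$ as a continuous $2d$-dimensional local martingale with mutually orthogonal components and explicit clocks.

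With $\tau^i(s):=\inf\{t:A^{ii}(t)>s\}$ and $W^i:=M^i\circ\tau^i$, the multidimensional Knight theorem upgrades orthogonality to independence, so $(W^1,\ldots,W^d,B^1,\ldots,B^d)$ is a standard $2d$-dimensional Brownian motion on a suitable enlargement of the basis; in particular $W$ and $B$ are independent. Because $A$ is $\dF_B$-measurable by condition (1), $W$ is also independent of $(A,B)$, and continuity of $A^{ii}$ together with the defining relation of $\tau^i$ recovers $M^i=W^i\circ A^{ii}$. Uniqueness of this characterization in law forces the full sequence $(M_n,A_n,B_n)$ to converge. The main obstacle, I expect, is the bracket identification in the second stage: upgrading the marginal weak and in-probability convergences of compensators to joint convergence with the martingales themselves so that the limiting brackets can actually be read off. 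This is essentially a uniform-integrability matter, powered by Hypothesis \ref{hyp:an_unbounded}(b)(ii).
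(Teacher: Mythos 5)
Your proposal is correct and follows essentially the same route as the paper: $\mathcal{C}$-tightness of $M_n$ from the tightness of the compensators plus the vanishing-jump condition (the paper proves this from Lenglart's inequality and Aldous's criterion, but notes itself that it also follows from Theorem VI.4.13 of Jacod--Shiryaev), identification of every subsequential limit through its brackets via the uniform integrability supplied by Hypothesis \ref{hyp:an_unbounded}(b)(ii) (the paper implements this with a Skorohod almost-sure representation and cylinder test functions), and finally the multidimensional Knight/time-change theorem (Ikeda--Watanabe II.7.3) together with the $\dF_B$-measurability of $A$ to obtain $M^i=W^i\circ A^{ii}$ with $W$ independent of $(A,B)$. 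No substantive gap; the only differences are which standard references are invoked for the tightness and bracket-identification steps.
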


No need for $A_n$ to converge in probability, nor for the nested filtrations required for stable convergence
\citep[Section VIII.5c]{Jacod/Shiryaev:2003}, the usual way to characterize the law of $M$ uniquely.
\begin{proof}
\BB{The idea of the proof is to use the convergence in law of $A_n$ to a continuous $A$ to get the tightness of $M_n$. Then, on another probability space, there is a subsequence converging almost surely. It remains to show that any possible limit of any subsequence has the desired properties.}
First, by Proposition \ref{prop:Cincreasing}, the continuity of $A$ implies that
$A_n^{ii} \stackrel{\mathcal{C}}{\rightsquigarrow} A^{ii}$ holds for every $i$.
By \citet[Theorem I.4.2]{Jacod/Shiryaev:2003}, one concludes
$(A_n^{ij}(t)-A_n^{ij}(s))^2 \le (A_n^{ii}(t)-A_n^{ii}(s)) (A_n^{jj}(t)-A_n^{jj}(s))$ almost surely, for every choice of $i$ and $j$,
so each $A_n^{ij}$ is $\mathcal{C}$-tight and hence $A_n \stackrel{\mathcal{C}}{\rightsquigarrow} A$ as well.
Next, set $X_n^{i}(t) = \left[M_n^{i}\right]_{\tau_n+t}-\left[M_n^{i}\right]_{\tau_n}$ and
$Y_n^{i}(t) = A_n^{ii}(\tau_n+t)-A_n^{ii}(\tau_n)$,
where $\tau_n$ is a sequence of bounded stopping times. The filtration of interest here is $\{\cF_{\tau_n+t}\}_{t\ge 0}$.
For any $\epsilon>0$, $\eta>0$ and  sequence $\delta_n \in (0,1) \to 0$,
$E X_n^{i}(\delta_n) = E Y_n^{i}(\delta_n)$ implies
 \begin{equation}\label{[Mn]J1tight}
 P\left(\left[M_n^{i}\right]_{\tau_n+\delta_n}-\left[M_n^{i}\right]_{\tau_n} \geq\varepsilon\right) \leq \frac{1}{\varepsilon}
\left[\eta+ E\left\{J_{\delta_n}(Y_n^{i}) \right\}\right] + P(Y_n^{i}(\delta_n))\geq\eta),
\end{equation}
using \eqref{eng2} of Lemma \ref{lem:lenglart}, with $J_{\delta_n}(Y_n^{i})\le Y_n^{i}(\delta_n)\le\omega_\cC(A_n^{ii},\delta_n,T)$,
since $A_n^{ii}$ is nondecreasing. Therefore, the continuity of the limit $A^{ii}$ implies $Y_n^{i}(\delta_n)\stackrel{Law}{\to}0$
while $E\left\{Y_n^{i}(\delta_n)\right\}\to0$ follows from Hypothesis \ref{hyp:an_unbounded}.b(ii) by the dominated convergence theorem
\citep[Proposition App1.2]{Ethier/Kurtz:1986}. By Theorem \ref{thm:M1tightness}, $\left[M_n^{i}\right]$ is $\cJ_1$-tight.
Since $J_t(\left[M_n^{i}\right]) \stackrel{Law}{\to} 0$ is assumed, $\left[M_n^{i}\right]$ is actually $\mathcal{C}$-tight and hence
so are $M_n^{i}$ and $(M_n^{i},A_n^{ii},\left[M_n^{i}\right])$ successively. For every choice of $i$ and $j$, inequality
$([M_n^{i},M_n^{j}]_t-[M_n^{i},M_n^{j}]_s)^2 \le (\left[M_n^{i}\right]_t-\left[M_n^{i}\right]_s) ([M_n^{j}]_t-[M_n^{j}]_s)$
ensures the individual $\mathcal{C}$-tightness of each $[M_n^{i},M_n^{j}]$.
The $\mathcal{C}$-tightness of matrix $[M_n]$ follows, implying that of $M_n$ and $(M_n,A_n,[M_n])$.
By Skorohod's theorem, e.g., \citet[Theorem 3.1.8]{Ethier/Kurtz:1986}, there exists a subsequence
$\{n_k\}$, a probability space $(\Omega^\prime,{\cF}^\prime,P^\prime)$, and $D$-valued processes
$Z^\prime_{n_k}:=(M^\prime_{n_k},B^\prime_{n_k},A^\prime_{n_k})$ and $Z^\prime:=(M^\prime,B^\prime,A^\prime)$
defined on $(\Omega^\prime,{\cF}^\prime,P^\prime)$ which are such that $Z^\prime_{n_k}$ and
$Z_{n_k}:=(M_{n_k},B_{n_k},\langle M_{n_k} \rangle )$ are identical in law for all $k \ge 1$, and
$Z^\prime_{n_k}$ converges almost surely to $Z^\prime$ uniformly on compact time sets.
We next prove that, for any such limit point $Z^\prime=(M^\prime,B^\prime,A^\prime)$,
$M^\prime$ and $B^\prime$ are both martingales with respect to the natural filtration
$\cF_t^\prime= \sigma\{M^\prime(s),B^\prime(s),A^\prime(s); s\le t\}$,
such that $\langle M^\prime \rangle_t  =A^\prime_t$, $\langle B^\prime \rangle_t=t$,  and $\langle M^\prime,B^\prime \rangle _t\equiv 0$.

\BB{For a given $\ell\ge1$ and $0\le s_1 \le \cdots \le s_{\ell} \le s\le t$, set $Y_{n_k} = \left( Z_{n_k}(s_1), \ldots, Z_{n_k}(s_\ell)\right)$,
$Y^\prime_{n_k} = \left( Z^\prime_{n_k} (s_1), \ldots, Z^\prime_{n_k}(s_\ell)\right)$ and $Y^\prime = \left( Z^\prime (s_1), \ldots, Z^\prime(s_\ell)\right)$. Then, for any continuous and bounded function $f$ of $\ell\times\{(1+d)^2-1\}$ variables,  $\left(M^\prime_{n_k}(t)-M^\prime_{n_k}(s)\right)f\left(Y^\prime_{n_k}\right)$ converges almost surely to
$\left(M^\prime(t)-M^\prime(s)\right)f\left(Y^\prime\right)$  and it is uniformly integrable since
$E\left\{\left( M^\prime_{n_k}(t) \right)^2\right\} = EA^\prime_{n_k}(t) = EA_{n_k}(t) \to EA(t)$.  }
\BB{For $M^\prime$ to be a $\cF^\prime$-martingale, it suffices to prove $E\left\{ \left(M^\prime(t)-M^\prime(s)\right)f(Y^\prime) \right\}=0$ for any bounded (by $1$) and continuous $f$, because the cylinder sets $\{Y^\prime\in O\}$, $O$ open, generate the $\sigma$-algebra
$\cF^\prime(s)$, and because  $\I_O$ can be approximated by a sequence of bounded and continuous functions.}
Without loss of generality, we do so coordinatewise (hence $d=1$) and drop the superscript $i$ for the rest of the proof,
in order to keep notation to a minimum. 
\BB{
Since $A_{n_k}(u)$ is $\cF_{n_k}(s)$-measurable for all $ 0\le u \le s$, $f(Y_{n_k})$ is $\cF_{n_k}(s)$-measurable and
\begin{eqnarray*}
0 = E\left\{ \left(M_{n_k}(t)-M_{n_k}(s)\right)f(Y_{n_k}) \right\}
&=& E\left\{ \left(M^\prime_{n_k}(t)-M^\prime_{n_k}(s)\right)f\left(Y^\prime_{n_k}\right) \right\} \\
& & \stackrel{k\to\infty}{\rightarrow} E\left\{ \left(M^\prime(t)-M^\prime(s)\right)f\left(Y^\prime\right) \right\}.
\end{eqnarray*}
}
Moreover, using \citet[Proposition App2.3]{Ethier/Kurtz:1986} and Hypothesis \ref{hyp:an_unbounded}.b,
$\{M^\prime_{n_k} (t)\}^2$ \BB{is uniformly integrable } for each $t\ge 0$. Next, for all $ 0 \le s\le t$,
\begin{eqnarray}
E\left\{\left\{M^\prime (t)-M^\prime(s)\right\}^2\right\} &=& \lim_{k\to \infty}
E\left\{\left\{M^\prime_{n_k} (t) - M^\prime_{n_k} (s)\right\}^2\right\} \nonumber \\
&=& \lim_{k\to \infty}  E\left\{A^\prime_{n_k} (t) - A^\prime_{n_k} (s)\right\}  =  E\left\{A(t)-A(s)\right\}.\label{eq:conv_quad}
\end{eqnarray}
Therefore $M^\prime$ is a square integrable martingale with respect to filtration
$\{\cF^\prime(t)\}_{t\ge 0}$. The same argument also applies to $B^\prime$.
\BB{Next,
$
\left\{ \left(M^\prime_{n_k}(t)-M^\prime_{n_k}(s)\right)^2-A^\prime_{n_k}(t) + A^\prime_{n_k}(s)\right\}f(Y^\prime_{n_k})$
converges almost surely to
$\left\{\left(M^\prime(t)-M^\prime(s)\right)^2-A^\prime(t)+A^\prime(s)\right\}f(Y^\prime)$} and its absolute value
 is bounded by
$\left(M^\prime_{n_k}(t)-M^\prime_{n_k}(s)\right)^2 +A^\prime_{n_k}(t)$,
which converges almost surely to
$\left(M^\prime(t)-M^\prime(s)\right)^2+A^\prime(t)$. Using \eqref{eq:conv_quad}, as $k\to\infty$, we have
$$
E\left\{\left(M^\prime_{n_k}(t)-M^\prime_{n_k}(s)\right)^2 +A^\prime_{n_k}(t)\right\} = 2EA^\prime_{n_k}(t)-EA^\prime_{n_k}(s)
\to E\left\{\left(M^\prime(t)-M^\prime(s)\right)^2 +A^\prime(t)\right\}.
$$
By dominated convergence \citet[Proposition App1.2]{Ethier/Kurtz:1986}, one can conclude
that
\BB{\begin{eqnarray*}
0 &=& E\left\{ \left\{ \left(M^\prime_{n_k}(t)-M^\prime_{n_k}(s)\right)^2-A^\prime_{n_k}(t) +
A^\prime_{n_k}(s)\right\}f(Y^\prime_{n_k}) \right\} \\
&\to &
E\left\{\left\{\left(M^\prime(t)-M^\prime(s)\right)^2-A^\prime(t)+A^\prime(s)\right\}f(Y^\prime)\right\}.
\end{eqnarray*}}
It follows that $A^\prime$ is the quadratic variation process of the martingale $M^\prime$ since, by construction,
$A^\prime(t)$ is $\cF^\prime(t)$-measurable.  Again, the same argument
holds true for $B^\prime$, with quadratic variation $\langle B\rangle_t=t$, $t\ge 0$.
Finally, both the within and cross off-diagonal terms in $\langle \tilde M_{n_k}, B_{n_k} \rangle $, are taken care of coordinatewise,
as in the preceding argumentation, setting $d=1$.
For instance, the copy of the cross term
\BB{$$
 \left\{  M^\prime_{n_k}(t)B^\prime_{n_k}(t)-M^\prime_{n_k}(s)B^\prime_{n_k}(s)
 - \langle \tilde M_{n_k}^\prime, B^\prime_{n_k} \rangle _t
 + \langle \tilde M_{n_k}^\prime, B^\prime_{n_k} \rangle _s \right\}f(Y^\prime_{n_k})
 $$
converges almost surely to $\left\{  M^\prime(t)B^\prime(t)-M^\prime(s)B^\prime(s)\right\}f(Y^\prime)$,}
and its absolute value, using Kunita-Watanabe's inequality,  is bounded by
$$
g_{n_k}= \frac{1}{2} \left(M^\prime_{n_k}(t)\right)^2 + \frac{1}{2} \left(M^\prime_{n_k}(s)\right)^2
+ \frac{1}{2} \left(B^\prime_{n_k}(t)\right)^2 + \frac{1}{2} \left(B^\prime_{n_k}(s)\right)^2
+ \frac{1}{2} A^\prime_{n_k}(t)  + \frac{1}{2} \langle B'_{n_k} \rangle_t,
$$
which converges almost surely to
$$
g= \frac{1}{2} \left(M^\prime(t)\right)^2 + \frac{1}{2} \left(M^\prime(s)\right)^2
+ \frac{1}{2} \left(B^\prime(t)\right)^2 + \frac{1}{2} \left(B^\prime(s)\right)^2 + \frac{1}{2} t+ \frac{1}{2} A^\prime(t).
$$
Hypothesis (b) implies $E(g_{n_k})\to E(g)$, so
\BB{$$
 E\left\{ \left\{  M^\prime_{n_k}(t)B^\prime_{n_k}(t)-M^\prime_{n_k}(s)B^\prime_{n_k}(s)
 - \langle \tilde M_{n_k}^\prime,B^\prime_{n_k} \rangle_t
 + \langle \tilde M_{n_k}^\prime, B^\prime_{n_k} \rangle_s \right\}f(Y^\prime_{n_k})\right\} \equiv 0
$$
converges to
$E\left\{ \left\{  M^\prime(t)B^\prime(t)-M^\prime(s)B^\prime(s)\right\}f(Y^\prime)\right\}$} and hence
$\langle M^\prime,B^\prime \rangle =0$.
Thus any limit point $Z^\prime = (M^\prime,B^\prime,A^\prime)$ has the property that
$M^\prime$, $B^\prime$, and $M^\prime B^\prime$  are martingales with respect to the natural filtration $\cF_t^\prime$,
with $\langle M^\prime \rangle  = A^\prime$, $B^\prime$ is a Brownian motion, and most importantly,
$\langle M^\prime,B^\prime \rangle_t\equiv 0$. Since we already know that the trajectories of $M^\prime$ and $B^\prime$ are continuous,
it follows from \citet[Theorem II.7.3]{Ikeda/Watanabe:1989} that $M^\prime  = W^\prime\circ A^\prime$
with independent Brownian motions $W^\prime$ and $B^\prime$ on probability space $(\Omega^\prime,{\cF}^\prime,P^\prime)$.
Since, $A^\prime$ is $\dF_{B^\prime}$-measurable by hypothesis, $W^\prime$ is also independent of $A^\prime$.
Therefore all limit points $(M^\prime,B^\prime,A^\prime)$ have the same law
since the law of $A^\prime$ is the same has the one of $A$.
\end{proof}

\begin{rem}
An historically important prototype of Theorem \ref{thm:clt1} is Rebolledo's landmark CLT for local martingales, when restricted to sequences of square integrable martingales \citep{Rebolledo:1980} satisfying an asymptotic rarefaction of jumps condition.
Functional CLTs involving limiting mixtures with non deterministic $A$ go back to \citet[p. 92-93]{Rebolledo:1979} for processes converging to a diffusion in $\dR^d$
 and \citet{Johansson:1994}, for point process martingales.
Additional references on the early successes in the discrete case can be found in \citet{Hall/Heyde:1980}
and in the continuous case in \citet[Section VIII.5]{Jacod/Shiryaev:2003}. More recently,
\citet{Merlevede/Peligrad/Utev:2019} display the current state of the art of the functional CLT
for rescaled non-stationary sequences and arrays of dependent random variables when the limit is continuous
--- either a Brownian motion or the stochastic integral of a continuous function with respect to a Brownian motion.
 \end{rem}

The proof of the following useful proposition 
relies on the tightness induced by the convergence of the quadratic variation process.
\begin{prop}\label{prop:stochInt}
Suppose $(\xi_j)_{j\ge 1}$ is a sequence of iid random variables with mean $0$ and variance $1$, independent of a
continuous stochastic process $\sigma$ defined on $[0,\infty)$. Set $\disp M_n(t) = \sni\sum_{j=1}^\nt \sigma\left(\frac{j-1}{n}\right)\xi_j$,
$\disp V_n(t) = n^{-1}\sum_{j=1}^\nt \sigma^2\left(\frac{j-1}{n}\right)$, and define $\disp V(t) = \int_0^t \sigma^2(s)ds$. Finally, set
$\disp B_n(t) = \sni\sum_{j=1}^\nt \xi_j$.
Then \BB{$(M_n,V_n,B_n) \stackrel{\cC}{\longrightarrow} (M,V,B)$,
where $B$ is a Brownian motion independent of $\sigma$ and $M$ can also be written as a stochastic integral
with respect to $B$ viz. $M(t) = \int_0^t \sigma(s)dB_s$, $M = W\circ V$, and $W$ is a Brownian motion independent of $V$. }
\end{prop}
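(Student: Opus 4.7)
The plan is to adapt the proof of Theorem \ref{thm:clt1}, the twist being that here the cross-compensator $\langle M_n,B_n\rangle$ does not vanish but converges to a non-trivial limit. I will work with the filtration $\cF_t^n = \sigma\{\sigma(u),\,u\ge 0;\;\xi_j,\,j\le \lfloor nt \rfloor\}$, so that $\sigma$ (and hence $V$) is $\cF_0^n$-measurable. A direct computation gives $\langle M_n\rangle = V_n$, $\langle B_n\rangle_t = n^{-1}\lfloor nt\rfloor$, and $\langle M_n,B_n\rangle_t = n^{-1}\sum_{j=1}^{\lfloor nt\rfloor}\sigma((j-1)/n)$; by uniform continuity of $\sigma$ on compacts each of these Riemann sums converges almost surely in $\cC$, respectively to $V(t)$, $t$, and $C(t):=\int_0^t\sigma(s)\,ds$. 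Conditioning on $\sigma$, Donsker's theorem delivers $B_n\stackrel{\cC}{\rightsquigarrow} B$ with $B$ a Brownian motion independent of $\sigma$, while the bound $J_t(M_n)\le n^{-1/2}\sup_{s\le t}|\sigma(s)|\,\max_{j\le\lfloor nt\rfloor}|\xi_j|$, together with finiteness of $E[\xi_1^2]$, sends the jumps of $M_n$ (and analogously $B_n$) to $0$ in probability.

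With $V_n$ $\cC$-convergent and vanishing jumps in hand, the Lenglart-type tightness argument used in the proof of Theorem \ref{thm:clt1} yields $\cC$-tightness of $M_n$, and hence of the triple $(M_n,V_n,B_n)$. I then pass, via Skorohod, to a subsequence converging almost surely on an enlarged probability space to a limit $(M',V',B')$ and re-run essentially verbatim the identification step from the proof of Theorem \ref{thm:clt1}: $M'$ and $B'$ are continuous square integrable martingales in the natural filtration of the enlargement, with $\langle M'\rangle = V'$ and $\langle B'\rangle_t = t$. The only genuinely new calculation is the cross-term, but the same uniform-integrability bound---controlled via Kunita-Watanabe by $V_{n_k}(t)^{1/2}\langle B_{n_k}\rangle_t^{1/2}$---now produces $\langle M',B'\rangle_t = C(t)$ instead of $0$. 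Writing $N' := M' - \int_0^\cdot \sigma(s)\,dB'(s)$ then yields $\langle N',B'\rangle\equiv 0$ and $\langle N'\rangle\equiv V' - 2V' + V' = 0$, forcing $M'(t) = \int_0^t \sigma(s)\,dB'(s)$.

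Finally, since $V'$ is $\cF_0^\prime$-measurable in the limit filtration, Dambis-Dubins-Schwarz applied to $M'$ gives $M' = W'\circ V'$ with $W'$ Brownian and independent of $\cF_0^\prime$, in particular of $V'$. Because every subsequential limit carries the same law, the full sequence $(M_n,V_n,B_n)$ converges in $\cC$ to the announced limit. The main obstacle will be in the second paragraph: extending the uniform-integrability control used for cross-terms in the proof of Theorem \ref{thm:clt1} so that it survives passage to a non-zero limit $C(t)$, and then tracing the orthogonal decomposition carefully enough to force $M' = \int\sigma\,dB'$.
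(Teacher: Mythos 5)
Your argument is sound, but it follows a genuinely different path from the paper's. After establishing tightness exactly as you do (vanishing jumps plus Theorem \ref{thm:M1tightness}), the paper does \emph{not} pass to a Skorohod representation: it identifies the limit law directly by computing the joint characteristic function of the increments of $(M_n,B_n,V_n)$, conditioning on $\sigma$ and expanding the product $\prod_j\varphi(\cdot)$ as in the classical Lindeberg CLT; the limiting conditional characteristic function is then read off as that of $\left(\int\sigma\,dB,\,B,\,V\right)$, and setting the $B$-frequencies to zero exhibits $M$ as $W\circ V$ with $W$ independent of $V$. That route is shorter, needs no filtration bookkeeping, and requires no moment assumption on $\sigma$ beyond continuity, since everything is done conditionally on $\sigma$ followed by bounded convergence. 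Your route --- martingale-problem identification of the subsequential limit, the non-zero cross bracket $\langle M',B'\rangle_t=\int_0^t\sigma(s)\,ds$, the orthogonal decomposition $N'=M'-\int\sigma\,dB'$ with $\langle N'\rangle=V'-2V'+V'\equiv 0$, then Dambis--Dubins--Schwarz --- is more structural and would survive in settings where the characteristic function is not computable, but it costs extra care at several points: you must carry the whole path of $\sigma$ (not merely $V_n$) through the Skorohod representation, both so that $\int\sigma\,dB'$ is well defined on the new space (note $V'$ alone determines only $|\sigma|$) and so that $\sigma$ is measurable at time $0$ in the limit filtration, which is what your martingale property and your final independence claim for $W'$ rest on; the uniform-integrability step requires $E\{\sigma^2(s)\}$ to be locally integrable, which the proposition does not assume; and Dambis--Dubins--Schwarz needs $V(\infty)=\infty$ or an enlargement of the space to define $W'$ beyond $V'(\infty)$. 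The computation $\langle N'\rangle\equiv 0$ is correct and is the cleanest part of your identification; with the bookkeeping above made explicit, your proof goes through.
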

\begin{proof}
Let $\cF_{n,t} = \sigma\left\{\xi_j, a\left(\frac{j}{n}\right); j\le \nt\right\}$.
Then $M_n$ is a $\cF_{n,t}$-martingale with
$\disp \langle M_n\rangle(t) =n^{-1}\sum_{j=1}^\nt \sigma^2\left(\frac{j-1}{n}\right)=V_n(t)$.
From the continuity of $\sigma$, one gets that $V_n\to V$ and $V$ is continuous.
Also, if $\disp B_n(t) = \sni \sum_{j=1}^\nt \xi_j$, then $(B_n,V_n) \stackrel{\cC}{\longrightarrow}
(B,V)$, where $B$ is a Brownian motion independent of $\sigma$ and $V$.
Since $J_t(M_n) \stackrel{Law}{\to} 0$ holds, Theorem \ref{thm:M1tightness} shows that $(M_n,V_n, B_n)$ is $\cC$-tight.
To complete the proof, let $W$ be a Brownian motion independent of $V$. It is sufficient to show that
for any  $0=t_0<t_1<\cdots < t_m$,
$M_n(t_1),\ldots, M_n(t_m)$, $V_n(t_1),\ldots, V_n(t_m)$, and $B_n(t_1),\ldots, B_n(t_m)$ converge jointly in law to $W\circ V(t_1),\ldots, B(t_m)$. To this end,
take $\theta_1, \eta_1, \lambda_1,\ldots, \theta_m, \eta_m,\lambda_m \in \dR$, and
set $\varphi(s) = E\left(e^{is \xi_j}\right)$. Next, setting $G_n = e^{i  \sum_{j=1}^m  \eta_{j}\{V_n(t_j)-V_n(t_{j-1})\} }$, and using the standard proof of the CLT, one gets
\begin{multline*}
E\left[ e^{ i \sum_{k=1}^m \left[  \lambda_{k}\{M_n(t_k)-M_n(t_{k-1})\}+  \theta_{k}\{B_n(t_k)-B_n(t_{k-1})\} +  \eta_{k}\{V_n(t_k)-V_n(t_{k-1})\}\right]} \right] \\
= E\left[G_n \prod_{k=1}^m \prod_{j= \lfloor nt_{k-1}\rfloor +1}^{\lfloor nt_{k}\rfloor}  \varphi\left\{\sni\theta_k + \sni \lambda_{k}\sigma\left(\frac{j-1}{n} \right)\right\} \right]\\
=  E\left[G_n e^{-\frac{1}{2} \sum_{k=1}^m  \lambda_k^2 \left\{V_n(t_k)-V_n(t_{k-1})\right\} -\frac{1}{2} \sum_{k=1}^m  \theta_k^2(t_k-t_{k-1})}\right]+o(1)\\
\to E\left[e^{-\frac{1}{2}\sum_{k=1}^m  \lambda_k^2\{V(t_k)-V(t_{k-1})\} -\frac{1}{2} \sum_{k=1}^m  \theta_k^2(t_k-t_{k-1}) +i \sum_{j=1}^m  \eta_{k}\{V(t_k)-V(t_{k-1})\} }\right]\\
= E\left[e^{i \sum_{k=1}^m \left[ \lambda_{k}\int_{t_{k-1}}^{t_k} \sigma(s)dB_s +  \eta_{k}\{V(t_k)-V(t_{k-1})\} +  \eta_{k}\{B(t_k)-B(t_{k-1})\}\right] }\right].
\end{multline*}
Taking $\theta_1=\cdots=\theta_m=0$, one gets that $M$ has also the same distribution as $ W\circ V$, for a Brownian motion $W$ independent of $V$.
\end{proof}

\section{Examples of application to occupation times}\label{sec:xmpls}

\subsection{Occupation times for Brownian motion}\label{ssec:BM}
Let $B$ denote a Brownian motion, $V$ continuous with compact support, and $\mu_V = \int_{-\infty}^\infty V(y)dy =0$ but
$V$ is not identically $0$. Set $F(x) = -2\int_{-\infty}^x V(y)dy$ and $G(x) =  \int_{-\infty}^x F(y)dy$ and consider the martingale
$M_1(t) = G(B_t) +\int_0^t V(B_s)ds$ and the occupation time $\int_0^t V(B_s)ds$. Setting $B_n(t) = n^{-\frac{1}{2}}B_{nt}$, then
the continuous martingale $M_n(t) =n^{-\frac{1}{4}} \int_0^{nt} F(B_s)dB_s = n^{\frac{1}{4}} \int_0^{t} F\left(n^{\frac{1}{2}}B_n(s)\right)dB_n(s)$ has the same asymptotic behavior as
$n^{-\frac{1}{4}} \int_0^{nt}V(B_s)ds$ since $G$ is bounded. Using the scaling property of Brownian motion, it follows that
$\left( M_n, [M_n ], B_n \right)_{\BB{t\ge 0}}   \stackrel{Law }{=}  \left(\tilde  M_n,[\tilde M_n ], \tilde B\right)_{\BB{t\ge 0}}$, where
$\tilde  B$ is another Brownian motion, $\disp \tilde  M_n(t) =  n^{\frac{1}{4}} \int_0^t F(\sqrt n \tilde B_u)d\tilde B_u$, and $\disp [\tilde M_n ]_t =  n^{\frac{1}{2}} \int_0^tF^2(\sqrt n \tilde B_u)du$.
Hence, as $n\to\infty$,
$\disp
A_n (t)=  [\tilde M_n ]_t  =  \sqrt n\int_\dR F^2(\sqrt n x)\ell_t(x)dx \stackrel{a.s.}{\to} \|F\|^2\ell_t(0) = A(t)$,
uniformly over compact time sets, where $\ell$ is the local time of Brownian motion $\tilde B$, and
$\disp
 \|F\|^2 = \int_\dR F^2(x)dx =
 -2\int_{-\infty}^\infty \int_{-\infty}^\infty |y-z| V(y)V(z)dzdy$.
As a result, $(A_n, \tilde B)\stackrel{\mathcal{C}}{\rightsquigarrow} (A,\tilde B)$, and $A$ is clearly $\dF_{\tilde B}$-measurable.  Using Theorem \ref{thm:clt1}, both $M_n$
and $n^{-\frac{1}{4}} \int_0^{n\cdot} V(B_s)ds$ converge weakly to $W\circ A$,
where $W$ is a Brownian motion  independent of $A$ and $\tilde B$.
In fact, the full consequence of Theorem \ref{thm:clt1} states that $(M_n, A_n,\tilde B) \stackrel{\mathcal{C}}{\rightsquigarrow} (M,A,\tilde B)$ holds.
The result for $M_n$ was first proven in
\citet{Papanicolaou/Stroock/Varadhan:1977}. The proof given here is much easier and is similar to the one in \cite{Ikeda/Watanabe:1989}.
The argument above for handling $A_n$ under $\mu_V=0$, carries through
to yield $n^{-\frac{1}{2}} \int_0^{n\cdot} V(B_s)ds \stackrel{\mathcal{C}}{\rightsquigarrow} \mu_V\ell_\cdot(0)$ when $\mu_V\not=0$.

\begin{rem}
Recall that $\ell_t(0)/\sqrt{t} \stackrel{Law}{=}|B_1|$ which has Mittag-Leffler distribution with parameter $\frac{1}{2}$. Next, the inverse local time $\tau$ is known to be a L\'evy process with density $ t y^{-\frac{3}{2}}e^{-\frac{t^2}{2y}}$, $y>0$, and Laplace transform $E\left[e^{-\lambda \tau_t}\right] = e^{-t\sqrt{2\lambda}}$, $\lambda\ge 0$ \citep{Borodin/Salminen:2002}. Hence $\tau_t$  has a positive stable distribution with index $\frac{1}{2}$.
\end{rem}

\subsection{Occupation times for random walks}\label{ssec:RW}
Let $S_n$ be the symmetric simple random walk on the integers $\dZ$,
$\disp N_n(x) = \sum_{k=1}^n \I(S_k=x)$ the number of its visits to $x\in\dZ$ up to time $n$ and
$V$ a real-valued function on $\dZ$ with compact support but $V\not\equiv 0$. Setting $\disp \mu_V := \sum_{x\in\dZ} V(x)$,
\cite{Dobrushin:1955} proved that if $\mu_V\neq 0$, then $n^{-\frac{1}{2}}\sum_{k=1}^n V(S_k) \stackrel{Law}{\to} \mu_V {\cV}$,
where $\cV \stackrel{Law}{=}|Z|$, where $Z\sim N(0,1)$; while if $\mu_V=0$, then
$n^{-\frac{1}{4}}\sum_{k=1}^n V(S_k)\stackrel{Law}{\to}  \mu_v \sqrt{\cV}Z$, where $Z\sim N(0,1)$ is independent of $\cV$,
and $\disp \mu_v = 2c_V^2 - \sum_{x\in\dZ}V^2(x)$, where
$\disp c_V^2 = -\sum_{y,z\in\dZ}|y-z|V(y)V(z)=2\sum_{z\in\dZ}\left\{\sum_{y<z}V(y)\right\}^2$.
Note that $\mu_v$ corresponds to expression $\|V\|^2$ in \cite{Lee/Remillard:1994a}.

Just as in Section \ref{ssec:BM}, we prove that $(V_n,B_n) \stackrel{\cC}{\rightsquigarrow} (M,B)$ ensues when $\mu_V=0$,
where $V_n(t) := n^{-\frac{1}{4}}\sum_{k=1}^\nt V(S_k)$,
$B_n(t) := n^{-\frac{1}{2}}S_{\nt}$, $B$ is a Brownian motion,
$A(t) = c_V^2 \ell_t(0)$ with $\ell$ the local time for $B$,
and $M = W\circ A$, where $W$ is a Brownian motion independent of $A$ and $B$.
We first build the pair $(M_n,A_n)$. To this end, set $\disp G(x) = -\sum_{y\in\dZ}|x-y|V(y)$. Then
\begin{eqnarray*}
TG(x) &=& \frac{G(x+1)+G(x-1)}{2} = -\sum_y V(y)\left\{\frac{|y-x-1|+|y-x+1|}{2}\right\}\\
&=& -V(x)-\sum_{y>x} V(y)(y-x)- \sum_{y<x} V(y)(x-y)= -V(x)+G(x),
\end{eqnarray*}
with $G$ is constant outside the support of $V$: in fact, if $V\equiv 0$ on $[a,b]^\complement$, then $G(x) = \sum_{y\in [a,b]} y V(y)=c$ if $x>b$,
while if $x<a$, then $G(x) = -c$.
Consequently, $M_n(t) := n^{-\frac{1}{4}}\sum_{k=1}^\nt \{G(S_k)-TG(S_{k-1})\}$
$ \disp =
  n^{-\frac{1}{4}}\left\{G(S_\nt)-G(0)\right\}+ n^{-\frac{1}{4}}\sum_{k=1}^\nt V(S_{k-1})$
is a martingale with
\begin{eqnarray*}
A_n(t) = \langle M_n \rangle_t &=& n^{-\frac{1}{2}}\sum_{k=1}^{\nt} E \left[ \left\{G(S_k)-TG(S_{k-1})\right\}^2 |\cF_{k-1}\right] \\
&=& n^{-\frac{1}{2}} \sum_{k=1}^{\nt}\left[  T G^2(S_{k-1})-\{TG(S_{k-1})\}^2 \right] = n^{-\frac{1}{2}} \sum_{k=1}^\nt v(S_{k-1}).
\end{eqnarray*}
Now, since $TG = G-V$, it follows that $v = TG^2 - (TG)^2 = 2VG - V^2 +TG^2 -G^2$ which has compact support
since $G^2$ is constant outside $[a,b]$. As a result,
\begin{eqnarray*}
A_n(t) &=& n^{-\frac{1}{2}}\sum_{k=1}^\nt v(S_{k-1}) = n^{-\frac{1}{2}}\sum_{x\in \dZ} v(x)N_{\nt-1}(x) \\
&= & n^{-\frac{1}{2}}\sum_{x\in \dZ} v(x)\left\{ N_{\nt-1}(x)-N_{\nt-1}(0)\right\} + \mu_v n^{-\frac{1}{2}}N_{\nt-1}(0)\\
&=& n^{-\frac{1}{4}}O_P(1)  +\mu_v  n^{-\frac{1}{2}} N_{\nt}(0),
\end{eqnarray*}
using Dobrushin's results and the fact that $v$ has compact support, where
$$
\mu_v = \sum_{x\in\dZ} v(x)=-2\sum_{y,x\in\dZ}|y-x|V(y)V(x)-\sum_{x\in\dZ}V^2(x) = \sum_{x\in\dZ}\left\{V(x)+2H(x)\right\}^2,
$$
and $H(x) = \sum_{y<x}V(y)$. These expressions are proven in Appendix \ref{app:muv}. In particular, if $V(a)=1$, $a\neq 0$, and $V(0)=-1$, then $\mu_v = 4|a|-2$.
It then follows that $(A_n,B_n) \stackrel{\cC}{\rightsquigarrow} (A,B)$,
where $B$ is a Brownian motion and $A(t) = \mu_v \ell_t(0)$ is the local time of the Brownian motion at $0$.
Next, setting $f(x)=x$, one gets that
\begin{eqnarray*}
\langle M_n, B_n \rangle_t
&=& n^{-\frac{3}{4}}\sum_{k=1}^\nt \{T(fG)(S_{k-1}) - Tf(S_{k-1})TG(S_{k-1})\}  = n^{-\frac{3}{4}}\sum_{k=1}^\nt g(S_{k-1}),
\end{eqnarray*}
where $g=T(fG)-Tf TG$. Since $Tf=f$, $G(x) = c$, $x>b$ and $G(x)=-c$, $x<a$, it follows that $g$ also has compact support and, from the previous calculations
and Dobrushin's result, that for any $t\ge 0$, $ \langle M_n, B_n \rangle_t = O\left(  n^{-\frac{1}{4}} \right)$.
Therefore, using Theorem \ref{thm:clt1}, $(M_n,A_n,B_n) \stackrel{\cC}{\rightsquigarrow} (W\circ A, A,B)$,
where $W$ is a Brownian motion independent of $A$ and $B$;
hence $(V_n,A_n,B_n) \stackrel{\cC}{\rightsquigarrow} (W\circ A,A,B)$ as well, since the above calculations yield also
$\disp \sup_{t\in[0,T]}|V_n(t) - M_n(t)| =  O\left(  n^{-\frac{1}{4}} \right)$ for any $T>0$.
The corresponding result $n^{-\frac{1}{4}}V_n \stackrel{\cC}{\rightsquigarrow} \mu_V \ell$ when $\mu_V\not=0$ ensues from the
definition of the local time for Brownian motion, as in the proof of $A_n \stackrel{\cC}{\rightsquigarrow} A$ in the case $\mu_V=0$.


\subsection{Scaling limit for the random comb}\label{ssec:Comb}
Let $(\xi_n,\zeta_n)$ and $(0,\psi_n)$  be two iid sequences independent of each other, with $P(\psi_n=\pm 1)=\frac{1}{2}$, and
$P\{(\xi_{n},\zeta_{n}) = (\pm 1, 0)\}  = P\{(\xi_{n},\zeta_{n}) = (0,\pm 1)\}=\frac{1}{4}$.
The comb process $(C_1,C_2)$ \citep{Bertacchi:2006, Csaki/Csorgo/Foldes/Revesz:2009b}  is a martingale with values on the integer lattice $\mathbb{Z}^2$,
as well as a Markov chain, started at $(0,0)$ and defined by
\begin{eqnarray*}
C_1(n+1) &=& C_1(n) + \xi_{n+1}\I_{\{C_2(n)=0\}},\\
C_2(n+1) &=& C_2(n) + \psi_{n+1}\I_{\{C_2(n)\neq 0\}}+
\zeta_{n+1}\I_{\{C_2(n)=0\}}.
\end{eqnarray*}
Note that $C_2$ is also a Markov chain on its own, while $C_1$ is not. For all $n\ge 0$,
\begin{eqnarray*}
E\left[\{C_1(n+1)-C_1(n)\}^2|\cF_n\right]& =&
\frac{1}{2}\I_{\{C_2(n)=0\}},\\
E\left[\{C_2(n+1)-C_2(n)\}^2|\cF_n\right]& = &1- \frac{1}{2}\I_{\{C_2(n)=0\}},\\
E\left[\{C_1(n+1)-C_1(n)\}\{C_2(n+1)-C_2(n)\}|\cF_n\right]& =&0.
\end{eqnarray*}
Set $A_1(n) = \sum_{k=1}^n \I_{\{C_2(k)=0\}}$ and $\tau_k = \inf\{n\ge 1; A_1(n)\ge k\}$, the time of the $k$-th
visit to $0$ by $C_2$ after the initial departure, with defaults $A_1(0) = 0$ and $\tau_0=0$.
Since the increments $\sigma_n = \tau_n-\tau_{n-1}$ are iid and
$n^{-\frac{1}{2}}C_2([n\cdot]) \stackrel{\mathcal{C}}{\rightsquigarrow} W_2$, a Brownian motion, there ensues
$n^{-\frac{1}{2}}2^{-1}A_1([n\cdot]) \stackrel{\mathcal{C}}{\rightsquigarrow} \eta_2$, the local time at $0$ of $W_2$.
Build the $\dR^2$-valued martingale $\Xi_n(t):=\left\{n^{-\frac{1}{4}}C_1(\nt), n^{-\frac{1}{2}}C_2(\nt)\right\}$ with
predictable quadratic variations $n^{-\frac{1}{2}}2^{-1}A_1(\nt)$ and $n^{-1}\nt-(2n)^{-1}A_1(\nt)$ componentwise.
Since  $J_T(\Xi_n) n^{-\frac{1}{4}}\to0$ as $n\to\infty$ almost surely,
the only possible weak limits of $\Xi_n$ have continuous trajectories.
Further, $\left\{n^{-\frac{1}{2}}2^{-1}A_1([n\cdot]),n^{-1}[n\cdot]-(2n)^{-1}A_1([n\cdot])\right\}
\stackrel{\mathcal{C}}{\rightsquigarrow} \{t\mapsto(\eta_2(t),t)\}$, a process with continuous trajectories.
Since $\Xi_n$ has uncorrelated components, all the conditions of Theorem \ref{thm:clt1} are met
and $\Xi_n \stackrel{\mathcal{C}}{\rightsquigarrow} (W_1\circ\eta_2,W_2)$.
Notice that if we write $X_n = C_1(\tau_n)-C_1(\tau_{n-1})$, then $(X_n,\sigma_n)_{n\ge 1}$ are iid.


\section{Volatility modeling and estimation}\label{sec:volatility_stats}

The financial return $R(t)$ at time $t$ for some investment under consideration is modelled as $R(t) = Z\circ \tau (t)$, with
$Z(s) := \gamma B(s)+ \beta s$, where $\tau$ is a strictly increasing process, independent of the Brownian motion $B$. This model was first proposed by \citet{Ane/Geman:2000}.
Time scale $\tau$ is meant to reflect business cycles and other features known collectively in economics as business time;
$Z$ is thus the financial return adjusted accordingly. Constant $\gamma$ is a scaling parameter and constant $\beta$ the trend after
the correction for business time.   Note that a different model with the same distributional features is $R(t) = \int_0^t \sigma(s) dW(s)+ \beta \tau(t)$, where $\sigma$ is a continuous process independent of the Brownian motion $W$ and $\tau(t) = \int_0^t \sigma^2(s)ds$. The latter was considered by \cite{Barndorff-Nielsen/Shephard:2002}.

Rigorous treatment of these models requires some technical results concerning
$D$-valued square integrable $\mathbb{F}$-martingales $M$ and their compensator $A=\langle M \rangle$.
We gather these next.
In what follows, we consider the partitions  $s_{k,\delta}=k\delta$
independent of $t$,  where $K(t,\delta)  = \left\lfloor \frac{t}{\delta}\right\rfloor$.
For each $\delta>0$, construct
$$
V_{p,\delta}(t) := \delta^{1-\frac{p}{2}} \sum_{1\le k\le K(t,\delta)}|M(s_{k,\delta})-M(s_{k-1,\delta})|^p
$$
and
$$
U_{p,\delta}(t) := \delta^{1-\frac{p}{2}} \sum_{1\le k\le K(t,\delta)}\{A(s_{k,\delta})-A(s_{k-1,\delta})\}^{\frac{p}{2}}.
$$
Before stating the next result, 
set $\disp
\mu_{p} = E(|Z|^p) = 2^{\frac{p}{2}} \dfrac{\Gamma\left(\frac{p+1}{2}\right) }{\Gamma\left(\frac{1}{2}\right)},
$
where $Z\sim N(0,1)$. So $\mu_{2n} = \prod_{k=1}^n (2k-1)$ and $\mu_{2n+1} = \sqrt{\frac{2}{\pi}}\cdot\prod_{k=1}^n (2k)$.
Typically useful values are $\mu_2=1$, $\mu_4=3$ and $\mu_8=105$.

\begin{lem}\label{lem:pvariation2}
Given is a real-valued martingale $M$ started at $M_0=0$ with finite $p^{th}$ moment for some integer $p\ge2$
and compensator $A$, where  $A_t=\int_0^ta_sds$, for some non-negative and continuous stochastic process $a$. Assume the existence of a Brownian motion $B$ such that $M = B\circ A$, with $A$ independent of $B$.
\BB{Then, for $0\le s\le t<\infty$, we have}
\begin{equation}\label{eq:Lp}
E\left\{|M_t-M_s|^{p} |\cF_s \right\} = \mu_{p} E\left\{ (A_t-A_s)^{\frac{p}{2}} |\cF_s \right\}.
\end{equation}
\BB{In addition,} $\disp \lim_{\delta\downarrow 0} U_{p,\delta} = \cU_{p}$ a.s., where $\disp \cU_p(t) = \int_0^t a_s^{\frac{p}{2}} ds$.
Furthermore,  under the additional assumption that $M$ has finite $(2p)^{th}$ moment,
$\disp
V_{p,\delta} \stackrel{Pr}{\longrightarrow}\cV_{p}=\mu_p \cU_{p}$ as $\delta\downarrow 0$.
In particular, if $M_t = \int_0^t \sigma_s dW_s$, for some Brownian motion $W$ and continuous non-negative process $\sigma$ independent of $W$, then $\disp \cV_{p}(t)= \mu_p \int_0^t \sigma_s^p ds$.
\end{lem}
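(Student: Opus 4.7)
The plan is to address each assertion of the lemma in sequence, leveraging the representation $M=B\circ A$ with $A\perp B$ and the continuity of $a$ throughout. For identity \eqref{eq:Lp}, I condition on the full $\sigma$-algebra $\sigma(A)$ generated by the time-change process. Since $A$ is independent of $B$, conditionally on $\cF_s\vee\sigma(A)$ the increment $M_t-M_s = B(A_t)-B(A_s)$ is Gaussian with (measurable) variance $A_t-A_s$, so the definition of $\mu_p$ gives $E\{|M_t-M_s|^p\mid\cF_s\vee\sigma(A)\}=\mu_p(A_t-A_s)^{p/2}$. Projecting back onto $\cF_s$ by the tower property delivers \eqref{eq:Lp}.

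For the almost sure convergence $U_{p,\delta}\to\cU_p$, I apply the mean value theorem to the integrated form of $A$: there exist $\xi_k\in[s_{k-1,\delta},s_{k,\delta}]$ with $A(s_{k,\delta})-A(s_{k-1,\delta})=a(\xi_k)\delta$, so that $U_{p,\delta}(t)=\delta\sum_{k=1}^{K(t,\delta)} a(\xi_k)^{p/2}$ is a Riemann sum for the continuous integrand $a^{p/2}$ and converges almost surely to $\int_0^t a_s^{p/2}ds=\cU_p(t)$.

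For $V_{p,\delta}\stackrel{Pr}{\to}\mu_p\cU_p$, since $U_{p,\delta}\to\cU_p$ a.s.\ it suffices to show $V_{p,\delta}-\mu_p U_{p,\delta}\to 0$ in probability. Write $X_k = |\Delta_k M|^p - \mu_p(\Delta_k A)^{p/2}$ for the increments over $[s_{k-1,\delta},s_{k,\delta}]$. Applying \eqref{eq:Lp} conditionally on $\sigma(A)$ shows each $X_k$ is centered; the independent-increment structure of $B$ further makes the $X_k$ conditionally independent, with conditional variance $(\mu_{2p}-\mu_p^2)(\Delta_k A)^p$ (this uses \eqref{eq:Lp} with exponent $2p$, where the finite $(2p)$-th moment hypothesis comes in). Combining with Jensen's inequality $(\Delta_k A)^p\le\delta^{p-1}\int_{s_{k-1,\delta}}^{s_{k,\delta}}a_s^p\,ds$ and the overall prefactor $\delta^{2-p}$ yields
$$
E\bigl\{(V_{p,\delta}(t)-\mu_p U_{p,\delta}(t))^2 \bigm| \sigma(A)\bigr\} \le (\mu_{2p}-\mu_p^2)\,\delta \int_0^t a_s^p\,ds.
$$
The main obstacle I anticipate is that the $(2p)$-th moment hypothesis on $M$ only yields $E[A_t^p]<\infty$ via \eqref{eq:Lp}, which (via Jensen) does not control $E\int_0^t a_s^p\,ds$. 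I sidestep this by keeping the bound conditional: since $a$ is continuous (hence a.s.\ bounded on $[0,t]$), the integral $\int_0^t a_s^p\,ds$ is a.s.\ finite, so the right-hand side above vanishes a.s.\ as $\delta\downarrow 0$. Conditional Chebyshev then gives conditional convergence in probability, and bounded convergence lifts this to unconditional convergence in probability.

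Finally, when $M_t=\int_0^t\sigma_s dW_s$ with continuous $\sigma$ independent of Brownian $W$, a Dambis--Dubins--Schwarz argument applied conditionally on $\sigma$ produces $M=B\circ A$ with $A_t=\int_0^t\sigma_s^2\,ds$ and $B$ a Brownian motion whose conditional law given $\sigma$ is standard; hence $B$ is independent of $\sigma$ and therefore of $A$. Applying the main result with $a_s=\sigma_s^2$ then gives $\cV_p(t)=\mu_p\int_0^t(\sigma_s^2)^{p/2}\,ds=\mu_p\int_0^t\sigma_s^p\,ds$.
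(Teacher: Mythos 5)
Your proposal is correct, and for the identity \eqref{eq:Lp} and the convergence $U_{p,\delta}\to\cU_p$ it coincides with the paper's argument (conditioning on $\sigma\{A\}$ plus Gaussian moments; mean value theorem plus Riemann sums for the continuous integrand $a^{p/2}$). Where you genuinely diverge is the step $V_{p,\delta}\to\mu_p\cU_p$. The paper treats $Z_\delta=V_{p,\delta}-\mu_pU_{p,\delta}$ as a martingale in $t$, computes its compensator $\langle Z_\delta\rangle_t$ with $E\{\langle Z_\delta\rangle_t\}=\delta(\mu_{2p}-\mu_p^2)E\{U_{2p,\delta}(t)\}$, and invokes Lenglart's inequality; this delivers $\sup_{s\le t}|Z_\delta(s)|\stackrel{Pr}{\to}0$, i.e.\ uniformity on compacts directly, but it silently requires $E\{U_{2p,\delta}(t)\}$ to stay bounded, which is where the finite $(2p)$-th moment is meant to enter (and which, as you note, controls $E\{A_t^p\}$ rather than $E\{\int_0^ta_s^p\,ds\}$). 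You instead exploit the conditional independence of the increments of $B$ given $\sigma(A)$ to get the conditional variance bound $(\mu_{2p}-\mu_p^2)\,\delta\int_0^ta_s^p\,ds$, then use conditional Chebyshev and bounded convergence; this cleanly sidesteps the integrability of $\int_0^ta_s^p\,ds$ (indeed your route does not really use the $(2p)$-th moment hypothesis at all, since the conditional law is Gaussian), at the price of only yielding convergence in probability for each fixed $t$. That weaker conclusion still suffices for the lemma as used downstream: $V_{p,\delta}$ is nondecreasing in $t$ and $\cV_p$ is continuous, so pointwise convergence upgrades to uniform convergence on compacts by the paper's own Proposition \ref{prop:Cincreasing}; it would be worth making that last remark explicit. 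Your Dambis--Dubins--Schwarz treatment of the final assertion is also fine and slightly more detailed than what the paper records.
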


\begin{proof}
Equation \eqref{eq:Lp} proceeds from
$E\left\{|B\circ A_t-B\circ A_s|^{p} |\sigma\{A\}\right\} = \mu_{p} (A_t-A_s)^{\frac{p}{2}}$ which itself ensues from
the independence of $A$ and $B$, plus the moments of a standard normal distribution.
When $p\ge 2$, note that
$\disp \cU_p(t) = \int_0^t a_s^{\frac{p}{2}} ds \ge U_{p,\delta}(t)$ and
$\disp
  \delta^{1-\frac{p}{2}}\{A(s_{k,\delta})-A(s_{k-1,\delta})\}^{\frac{p}{2}}  = \delta \{a(w_{k,\delta})\}^{\frac{p}{2}}
\le
\int_{s_{k-1,\delta}}^{s_{k,\delta}} a_s^{\frac{p}{2}}ds$,
for some $w_{k,\delta}\in[s_{k-1,\delta},s_{k,\delta}]$.
As a result,  $\disp\lim_{\delta\downarrow 0} U_{p,\delta}=\cU_p$ since $a$ is continuous and  $\disp  \lim_{\delta\downarrow 0} \sum_{k=1}^{K(t,\delta)}\int_{s_{k-1,\delta}}^{s_{k,\delta}} \left[ a_u^{\frac{p}{2}}  - \{a(w_{k,\delta})\}^{\frac{p}{2}}\right]du =0$.
It remains to show that the filter of martingales $Z_\delta:=V_{p,\delta}-\mu_pU_{p,\delta}$ converges in probability to $0$,
 uniformly on compact time sets --- the martingale property proceeds at once, just as in the proof of Equation \eqref{eq:Lp}.
 Assuming $E\{|M_t|^{2p}\}<\infty$ for any $t\ge0$, the representation $M = B\circ A$ yields
  \begin{multline*}
E\left\{ |M(s_{k,\delta})-M(s_{k-1,\delta})|^p \{A(s_{k,\delta})-A(s_{k-1,\delta})\}^{\frac{p}{2}} \Big|\cF_{s_{k-1,\delta}}\right\} \\
= \mu_p E\left\{  \{A(s_{k,\delta})-A(s_{k-1,\delta})\}^{p} \Big|\cF_{s_{k-1,\delta}}\right\} .
\end{multline*}
Expanding the square in
 \begin{multline*}
E\left\{\left( |M(s_{k,\delta})-M(s_{k-1,\delta})|^p
 - \mu_p \{A(s_{k,\delta})-A(s_{k-1,\delta})\}^{\frac{p}{2}} \right)^2\Big|\cF_{s_{k-1,\delta}}\right\} \\
= (\mu_{2p} - \mu_p^2) E\left\{  \{A(s_{k,\delta})-A(s_{k-1,\delta})\}^{p} \Big|\cF_{s_{k-1,\delta}}\right\}
\end{multline*}
implies that $Z_\delta$ is square integrable with compensator $\langle Z_\delta \rangle$ given by
$$
\langle Z_\delta \rangle_t = \delta^{2-p} (\mu_{2p} - \mu_p^2)  \sum_{1\le k\le K(t,\delta)}
E\left\{  \{A(s_{k,\delta})-A(s_{k-1,\delta})\}^{p} \Big|\cF_{s_{k-1,\delta}}\right\},
$$
so there ensues $E\{\langle Z_\delta \rangle_t\}=\delta (\mu_{2p} - \mu_p^2) E\{U_{2p,\delta}(t)\}$ which goes to $0$ with $\delta$.
By Lemma \ref{lem:lenglart}, $\disp \sup_{0\le s\le t}|Z_\delta(s)|$ converges in probability to $0$ with $\delta$ as well.
\end{proof}
\begin{rem}
This result is an extension of \cite{Barndorff-Nielsen/Shephard:2003a}.
They only prove their result for a fixed $t$ with convergence in probability but claimed it could also be true as a process.
The case $p=2$ is just the definition of quadratic variation $[M]$ --- see the proof of \citet[Proposition 2.3.4]{Ethier/Kurtz:1986},
where the existence of $\cV_{2}:=\lim_{\delta\downarrow 0} V_{2,\delta} $ is shown to hold for any right continuous local martingale
and without any additional restriction, neither on $A$ nor on the filter.
Note that, when $p>2$, the limit $\cV_{p}$ does not exist if $M$ is not continuous everywhere.
The asymptotics for $V_{p,\delta}$ under $p\in(0,2)$ are covered extensively by \cite{Jacod:2007} and \cite{Jacod:2008},
for a large class of processes with jumps, based on equally spaced observations.
The cases $p\ge3$ are also examined in \citet[Theorems 2.11(i)]{Jacod:2008} --- the presence of jumps in the limit yields
a CLT with $\delta^{-\frac{1}{2}}$ instead of $\delta^{1-\frac{p}{2}}$. For our continuous limits, the larger fluctuations observed there disappear.
See his comments in Remarks 2.14, 2.15 and 2.16.
  \end{rem}

We can now state a first consistency result for an estimator of the realized volatility error in investment returns,
when data is collected at regular intervals. This is an extension of the results presented in \cite{Barndorff-Nielsen/Shephard:2002}, where convergence was limited to a fixed $t$, while here we obtain the convergence of the whole process.
Before stating the first theorem, define
\begin{eqnarray*}
X_n(t) &:=& n^{\frac{1}{2}} \sum_{j=1}^{\lfloor nt \rfloor} \left[ \left\{\Delta_n M\left(\frac{j}{n}\right) \right\}^2 -\Delta_n A\left(\frac{j}{n}\right)  \right],\\
\langle X_n \rangle _t
&:=& 2n \sum_{j=1}^{\lfloor nt \rfloor} E\left[\left.  \left\{\Delta_n A\left(\frac{j}{n}\right) \right\}^2 \right| \cF_{\frac{j-1}{n}}\right],
\\
V_n(t) &:=& n \sum_{j=1}^{\lfloor nt \rfloor} \left\{\Delta_n M\left(\frac{j}{n}\right) \right\}^4,
\end{eqnarray*}
where $\Delta_n f(s) = f(s)-f(s-1/n)$. 

\begin{thm}[Numerical scheme]\label{thm:num_scheme}
Assume that both $A(t) \to\infty$ and $\cV_{4}(t) \to\infty$ as $t\to\infty$.
Under all the conditions of Lemma \ref{lem:pvariation2} with $p=4$, including the finiteness of $E\{|M_t|^8\}<\infty$ for any $t\ge0$,
there is a standard Brownian motion $W$ independent of $\cV_{4}$ such that
$(X_n,\langle X_n \rangle,V_n) \stackrel{\mathcal{C}}{\rightsquigarrow} ( W\circ \cA ,\cA,\cV_{4})$,
where $\cA = 2\cU_4 = \dfrac{2}{3}\cV_{4}$.
Furthermore, for any adapted $D$-valued process $N$
such that $\disp n^{\frac{1}{2}} \sum_{j=1}^{\lfloor nt \rfloor} \left\{\Delta_n N\left(\frac{j}{n}\right) \right\}^2 \stackrel{Pr}{\to} 0$ and
$\disp n \sum_{j=1}^{\lfloor nt \rfloor} \left\{\Delta_n N\left(\frac{j}{n}\right)\right\}^2\Delta_n A\left(\frac{j}{n}\right) \stackrel{Pr}{\to} 0$
both hold for all $t>0$, there also comes $Y_n \stackrel{\mathcal{C}}{\rightsquigarrow} W\circ \cA$, where
$\disp Y_n(t):= n^{\frac{1}{2}} \sum_{j=1}^{\lfloor nt \rfloor} \left[ \left\{\Delta_n (M+N)\left(\frac{j-1}{n}\right) \right\}^2 -\Delta_n A\left(\frac{j-1}{n}\right)\right]$.
\end{thm}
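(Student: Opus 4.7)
The strategy is to verify the hypotheses of Theorem \ref{thm:clt1} for the sequence $X_n$, regarded as a martingale with respect to the discrete filtration $(\cF_{j/n})_{j\ge0}$ extended to be piecewise constant in $t$. First, I will check that $X_n$ is a martingale: since $M^2-A$ is a martingale, $E\bigl[(\Delta_nM(j/n))^2-\Delta_nA(j/n)\mid\cF_{(j-1)/n}\bigr]=0$. For the compensator, I will use the representation $M=B\circ A$ with $B$ a Brownian motion independent of $A$: conditionally on $\sigma(A)$, the increment $\Delta_nM(j/n)=B(A(j/n))-B(A((j-1)/n))$ is $N(0,\Delta_nA(j/n))$, which gives $E\bigl[\{(\Delta_nM)^2-\Delta_nA\}^2\mid\cF_{(j-1)/n}\bigr]=2E[(\Delta_nA)^2\mid\cF_{(j-1)/n}]$, yielding the stated formula for $\langle X_n\rangle$.

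Next I will identify the limits. Since $\Delta_nA(j/n)=\int_{(j-1)/n}^{j/n}a_s\,ds\approx a((j-1)/n)/n$ by continuity of $a$, a Riemann-sum argument and Lemma \ref{lem:pvariation2} (applied with $p=2$ and $p=4$) give $\langle X_n\rangle_t\to 2\int_0^t a_s^2\,ds=2\cU_4=\cA$ and $V_n\to\cV_4=\mu_4\cU_4=3\cU_4$ uniformly on compacts in probability, so $\cA=\tfrac{2}{3}\cV_4$ as claimed. The jumps of $X_n$ are $n^{1/2}|(\Delta_nM)^2-\Delta_nA|$, which are $o_P(1)$ thanks to the moment control coming from the conditional Gaussian structure combined with continuity of $a$; together with the $\cJ_1$-tightness supplied by Theorem \ref{thm:M1tightness}, this delivers the $\cC$-tightness of the sequence $(X_n,\langle X_n\rangle,V_n)$.

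To supply the auxiliary martingale required by Theorem \ref{thm:clt1}, I will enlarge the stochastic basis to include a Brownian motion $\hat B$, independent of the Brownian motion $B$ figuring in $M=B\circ A$, such that the continuous process $a$ is adapted to the natural filtration of $\hat B$; such enlargements are standard. Then $B_n(t):=\hat B(\lfloor nt\rfloor/n)\to\hat B$ uniformly on compacts, so $(\langle X_n\rangle,B_n)\stackrel{\cC}{\rightsquigarrow}(\cA,\hat B)$ with $\cA$ in $\dF_{\hat B}$. Because $B$ is independent of $\hat B$, conditioning on $\sigma(A,\hat B)$ shows that $E\bigl[\{(\Delta_nM)^2-\Delta_nA\}\cdot\Delta_n\hat B\mid\cF_{(j-1)/n}\bigr]=0$, hence $\langle X_n,B_n\rangle\equiv 0$. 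Applying Theorem \ref{thm:clt1} yields $X_n\stackrel{\cC}{\rightsquigarrow}W\circ\cA$ with $W$ independent of $\hat B$ and $\cA$, and the joint convergence $(X_n,\langle X_n\rangle,V_n)\stackrel{\cC}{\rightsquigarrow}(W\circ\cA,\cA,\cV_4)$ follows since $\langle X_n\rangle$ and $V_n$ converge (on the same enlarged basis) to $\sigma(a)$-measurable limits.

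For the extension to $Y_n$, I will decompose $Y_n-X_n=n^{1/2}\sum_j\bigl[2\Delta_nM\,\Delta_nN+(\Delta_nN)^2\bigr]$. The $(\Delta_nN)^2$-sum vanishes uniformly in probability by the first hypothesis on $N$. For the cross term, conditioning on $\sigma(A,N)$ makes the $\Delta_nM$ independent Gaussians $N(0,\Delta_nA)$, so the cross term becomes (conditionally on $\sigma(A,N)$) a discrete martingale with quadratic variation $4n\sum(\Delta_nN)^2\Delta_nA$; the second hypothesis on $N$ shows this quadratic variation tends to zero in probability, and Doob's maximal inequality then forces uniform convergence to $0$ of the cross-term process. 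Slutsky gives $Y_n\stackrel{\cC}{\rightsquigarrow}W\circ\cA$. The main delicate point, which I expect to require the most care, is the construction of $\hat B$ supporting the application of Theorem \ref{thm:clt1} while preserving $\langle X_n,B_n\rangle=0$; an equivalent route, which I could fall back on, is to invoke Proposition \ref{prop:stochInt} after writing $(\Delta_nM)^2-\Delta_nA=\Delta_nA\cdot(Z_j^2-1)$ with $Z_j$ i.i.d.\ $N(0,1)$ independent of $A$, absorbing the approximation error $\Delta_nA-a((j-1)/n)/n$ into a negligible remainder.
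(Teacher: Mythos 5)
Your fallback route --- writing $\{\Delta_n M(j/n)\}^2 - \Delta_n A(j/n) = \Delta_n A(j/n)\,(Z_j^2-1)$ with $Z_j$ i.i.d.\ standard Gaussian independent of $A$, absorbing the Riemann error $\Delta_n A(j/n) - a((j-1)/n)/n$, and invoking Proposition \ref{prop:stochInt} --- is precisely the paper's proof, and it should be your primary route rather than a fallback. The route you put first, through Theorem \ref{thm:clt1}, has two genuine gaps. First, one cannot in general construct a Brownian motion $\hat B$, independent of $B$, with the continuous process $a$ \emph{adapted} to the natural filtration of $\hat B$: by Blumenthal's $0$--$1$ law $\cF_{\hat B,0+}$ is trivial, so $a_0$ would have to be almost surely constant, which is not assumed. (What the proof of Theorem \ref{thm:clt1} actually uses is measurability of $A$ with respect to $\sigma\{\hat B_s : s\ge 0\}$, which can be arranged by a Borel-isomorphism enlargement, but that is not what you wrote and it still needs an explicit construction.) Second, invoking Theorem \ref{thm:clt1} obliges you to verify Hypothesis \ref{hyp:an_unbounded}(a) for $\langle X_n\rangle$, namely $\langle X_n\rangle_t\to\infty$ as $t\to\infty$ for each fixed $n$, and this does not follow from $A(\infty)=\cV_4(\infty)=\infty$: take $a$ with a triangular spike of height $k$ and width $k^{-2}$ in the $k$-th block, so that $\int_0^\infty a_s^2\,ds=\infty$ and $A(\infty)=\infty$ while $n\sum_j\{\Delta_n A(j/n)\}^2<\infty$. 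Proposition \ref{prop:stochInt} is proved directly by characteristic functions and sidesteps both issues, which is exactly why the paper uses it here.

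Two smaller points. Your identification $\langle X_n\rangle_t\to 2\cU_4(t)$ ``by a Riemann sum'' ignores the conditional expectations $E[\{\Delta_n A(j/n)\}^2\mid\cF_{(j-1)/n}]$ appearing in $\langle X_n\rangle$; the paper closes this by writing $V_n=\frac{3}{2}\langle X_n\rangle+\cZ_n$ and killing the martingale $\cZ_n$ with Lemma \ref{lem:pvariation2} for $p=8$ together with Lenglart's inequality. And in the $Y_n$ step, the claim that ``conditioning on $\sigma(A,N)$ makes the $\Delta_n M$ independent Gaussians'' is false for a general adapted $N$, which may depend on the Brownian motion $B$ driving $M$; the correct argument, used in the paper, is that the cross term is the stochastic integral of the predictable step process $n^{1/2}\Delta_n N((j-1)/n)$ against $M$, whose bracket $4n\sum_j\{\Delta_n N((j-1)/n)\}^2\Delta_n A((j-1)/n)$ tends to $0$ in probability by hypothesis, so Lenglart (not Doob, whose $L^2$ form would require convergence of the expectations of the bracket) yields uniform convergence to $0$.
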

\begin{proof}
By Lemma \ref{lem:pvariation2}, $V_n$ converges to $\cV_4$ in probability and hence
$V_n \stackrel{\mathcal{C}}{\rightsquigarrow} \cV_4$ holds, by Proposition \ref{prop:Cincreasing}.
The expression for $\langle X_n \rangle$ is a direct consequence of Lemma \ref{lem:pvariation2}, which entails
$E\left[ \left\{\Delta_n M\left(\frac{j}{n}\right) \right]^4 | \cF_{\frac{j-1}{n}}\right]=3E\left[ \left\{\Delta_n A\left(\frac{j}{n}\right) \right]^2 | \cF_{\frac{j-1}{n}}\right]$,
while the representation $M = B\circ A$ yields
$$
E\left[ \left\{\Delta_n M\left(\frac{j}{n}\right) \right]^2 \Delta_n A\left(\frac{j}{n}\right)  | \cF_{\frac{j-1}{n}}\right]= E\left[ \left\{\Delta_n A\left(\frac{j}{n}\right) \right]^2 | \cF_{\frac{j-1}{n}}\right].
$$
Also, $V_n = \frac{3}{2}\langle X_n \rangle +  \cZ_n$, where
$$
\cZ_n(t)=n \sum_{j=1}^\nt \left[ \left\{ \Delta_n M\left(\frac{j}{n}\right)\right\}^4 - E\left[\left\{\Delta_n M\left(\frac{j}{n}\right)\right\}^4 | \cF_{\frac{j-1}{n}} \right] \right]
$$
is a martingale with
$$
\langle \cZ_n \rangle_t = \mu_8 n^2 \sum_{k=1}^\nt E\left[ \left\{\Delta_n A\left(\frac{j}{n}\right) \right]^4 | \cF_{\frac{j-1}{n}}\right]
- \mu_4^2 n^2  \sum_{k=1}^\nt E^2\left[ \left\{\Delta_n A\left(\frac{j}{n}\right) \right]^2 | \cF_{\frac{j-1}{n}}\right].
$$
By Lemma \ref{lem:pvariation2} with $p=8$, $nE\{\langle \cZ_n \rangle_t\}$ is bounded above by sequence
$E\{U_{8,1/n}(t)\}$ which converges to $E\{\cU_8(t)\}$ and hence
$\langle \cZ_n \rangle \stackrel{\mathcal{C}}{\rightsquigarrow} 0$ holds, by Proposition \ref{prop:Cincreasing}.
By Lemma \ref{lem:lenglart}, $\disp \sup_{0\le t\le T}|\cZ_n(t)|$ converges in probability to $0$.
This implies $\langle X_n \rangle \stackrel{\mathcal{C}}{\rightsquigarrow} 2 \cU_4 = \dfrac{2}{3}\cV_{4}$.
Next, let $Z_j$ be iid standard Gaussian random variables independent of $A$,
set $\disp \dB_n(t) = (2n)^{-\frac{1}{2}}\sum_{j=1}^\nt (Z_j^2-1)$;
further set $a_{n,j} = A\left(\frac{j}{n}\right)-A\left(\frac{j-1}{n}\right)$.
It is then clear that $(A_n,\dB_n) \stackrel{\cC}{\longrightarrow} (A,\dB)$, where $\dB$ is a Brownian motion independent of $a$.
For any  $0=t_0<t_1<\cdots < t_m$, and $\lambda_1,\ldots, \lambda_m \in \dR$,
since $M=B\circ A$, where $B$ is a Brownian motion independent of $A$,
one has
\begin{multline*}
E\left[\exp\left[i \disp\sum_{k=1}^m   \lambda_{k}\{X_n(t_k)-X_n(t_{k-1})\} \right] \right] \\
=E\left[\exp\left[i \sn \sum_{k=1}^m  \lambda_{k}\sum_{j=\lfloor nt_{k-1}\rfloor+1}^{\lfloor nt_{k}\rfloor} a_{n,j} \left(Z_j^2-1\right)  \right]\right]\\
=E\left[\exp\left[i 2^{\frac{1}{2}} \sni \sum_{k=1}^m  \lambda_{k}\sum_{j=\lfloor nt_{k-1}\rfloor+1}^{\lfloor nt_{k}\rfloor} a\left(\frac{j-1}{n}\right) \left(\frac{Z_j^2-1}{2^{\frac{1}{2}}}\right)  \right]\right]+o(1).\\
\end{multline*}
One can then use Proposition \ref{prop:stochInt} to conclude that
$(X_n,\langle X_n \rangle,V_n, \dB_n) \stackrel{\mathcal{C}}{\rightsquigarrow} (X,\cA,\cV_{4},\dB)$,
where $\dB$ is a Brownian motion independent of $\cA = 2\cU_4$ and $\disp X_t = \int_0^t a(s)d\dB(s)$.
Furthermore, $X$ can be written as $X=W\circ \cA$, where $W$ is a Brownian motion independent of $\cA$. Finally, writing
$\disp
Z_n(t):= n^{\frac{1}{2}} \sum_{j=1}^{\lfloor nt \rfloor} \Delta_n N\left(\frac{j-1}{n}\right)$
yields square integrable martingale
$\disp
\int_0^t Z_n(s) dM(s) = n^{\frac{1}{2}} \sum_{j=1}^{\lfloor nt \rfloor} \Delta_n M\left(\frac{j}{n}\right) \Delta_n N\left(\frac{j-1}{n}\right)
$
with
$\disp \int_0^t Z_n^2(s)ds = \sum_{j=1}^{\lfloor nt \rfloor} \left\{\Delta_n N\left(\frac{j-1}{n}\right)\right\}^2$
and quadratic variation
$$
\left\langle \int_0^t Z_n(s) dM(s) \right\rangle = \int_0^t Z_n^2(s) d\langle M \rangle_t
= n \sum_{j=1}^{\lfloor nt \rfloor} \{\Delta_n N\left(\frac{j-1}{n}\right)\}^2\Delta_n A\left(\frac{j-1}{n}\right).
$$
Therefore
$Y_n(t)-X_n(t)= n^{\frac{1}{2}}\int_0^t Z_n^2(s)ds + 2\int_0^t Z_n(s) dM(s) \stackrel{Pr}{\to} 0$
holds for every $t>0$, since $\int_0^t Z_n^2(s) d\langle M \rangle_t \stackrel{Pr}{\to} 0$.
Hence $Y_n-X_n \stackrel{\mathcal{C}}{\rightsquigarrow} 0$ holds as well, yielding
the last statement for $Y_n$ via Lemma \ref{lem:lenglart}, Theorem \ref{thm:M1tightness} and Proposition \ref{prop:Cincreasing}.
\end{proof}
A frequent choice for perturbation process $N$ is a linear function of the volatility. Recall the modulus of continuity of $A$, defined by
$$
\omega_\cC(A,\delta,T) = \sup_{0 \le t_1 < t_2 \le T, \; t_2-t_1<\delta}\|A(t_2)-A(t_1)\|.
$$
\begin{cor}\label{cor:num_scheme2}
Suppose that $N(t)=\mu t+\beta A(t)$, for some constants $\mu\in\dR$ and $\beta\ge0$, and that
the modulus of continuity of $A$ satisfies $\delta^{-\alpha}\omega_\cC(A,\delta,t) \stackrel{Pr}{\to} 0$ as $\delta\to0$,
for some $\alpha>\frac{3}{4}$ and all $t>0$. The conclusions of Theorem \ref{thm:num_scheme} hold.
\end{cor}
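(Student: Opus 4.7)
The plan is to recognize that all hypotheses of Theorem \ref{thm:num_scheme} hold by assumption, so only the two perturbation conditions on $N$ require verification. With $N(t)=\mu t+\beta A(t)$, I must show that for each fixed $t>0$, both
$S_n^{(1)}(t):=n^{1/2}\sum_{j=1}^{\nt}\{\Delta_n N(j/n)\}^2$
and
$S_n^{(2)}(t):=n\sum_{j=1}^{\nt}\{\Delta_n N(j/n)\}^2\Delta_n A(j/n)$
vanish in probability.

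The first step is to split $\Delta_n N(j/n)=\mu/n+\beta\,\Delta_n A(j/n)$ and apply $(a+b)^2\le 2a^2+2b^2$ to obtain $\{\Delta_n N(j/n)\}^2\le 2\mu^2 n^{-2}+2\beta^2\{\Delta_n A(j/n)\}^2$. The drift contribution from $\mu t$ yields a deterministic $O(n^{-1/2})$ term in $S_n^{(1)}$ and an $O_P(n^{-1})A(t)$ term in $S_n^{(2)}$, both trivially $o_P(1)$ because $A(t)=O_P(1)$ by continuity of $A$. The entire argument thus reduces to controlling the two $\beta^2$ pieces, which are driven by increments of $A$ alone.

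Next, set $\omega_n:=\omega_\cC(A,1/n,t)$, so the standing hypothesis reads $n^{\alpha}\omega_n\stackrel{Pr}{\to}0$. Since $A$ is non-decreasing, every increment satisfies $\Delta_n A(j/n)\le\omega_n$. For $S_n^{(1)}$, bounding uniformly $\{\Delta_n A(j/n)\}^2\le\omega_n^2$ over at most $nt$ terms yields $2\beta^2 t\,n^{3/2}\omega_n^2=o_P(n^{3/2-2\alpha})$, which is $o_P(1)$ precisely when $\alpha>3/4$; this is where the numerical threshold in the hypothesis arises. For $S_n^{(2)}$, the sharper telescoping bound $\{\Delta_n A(j/n)\}^3\le\omega_n^2\,\Delta_n A(j/n)$ produces $2\beta^2 n\,\omega_n^2 A(t)=o_P(n^{1-2\alpha})$, which is $o_P(1)$ already under $\alpha>1/2$ and therefore a fortiori under $\alpha>3/4$.

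With both perturbation conditions in place, Theorem \ref{thm:num_scheme} applies directly and delivers the conclusion. The argument is essentially arithmetic and I do not anticipate a serious obstacle; the pinch point is simply the uniform $k=2$ bound in $S_n^{(1)}$, which is what forces the threshold $\alpha>3/4$ in the standing assumption on the modulus of continuity of $A$.
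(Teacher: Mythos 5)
Your proof is correct and takes essentially the same route as the paper's: decompose $\Delta_n N(j/n)$ into the drift part $\mu/n$ and $\beta\,\Delta_n A(j/n)$, bound the increments of $A$ by $\omega_\cC(A,1/n,t)$, and observe that the first sum is $O_P\!\left(n^{3/2}\omega_\cC^2(A,1/n,t)\right)=o_P\!\left(n^{3/2-2\alpha}\right)$, which is exactly where $\alpha>\frac{3}{4}$ enters. Your telescoping bound $\{\Delta_n A\}^3\le\omega_\cC^2(A,1/n,t)\,\Delta_n A$ for the second sum is marginally sharper than the paper's uniform bound $tn^2\omega_\cC^3(A,1/n,t)$ (which already suffices under $\alpha>\frac{2}{3}$), but this is a cosmetic difference.
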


\begin{proof}
For the terms in $A$, the two conditions on $N$ are a consequence of
 \begin{multline*}
\disp n^{\frac{1}{2}} \sum_{j=1}^{\lfloor nt \rfloor} \left\{\Delta_n A\left(\frac{j}{n}\right)\right\}^2
+ n \sum_{j=1}^{\lfloor nt \rfloor} \left\{\Delta_n A\left(\frac{j}{n}\right)\right\}^3
\le tn^{3/2}\omega_\cC^2(A,1/n,t) + tn^2\omega_\cC^3(A,1/n,t).
\end{multline*}
The terms in $\mu$ are treated similarly.
\end{proof}

This numerical scheme extends readily to higher powers. We state it without proof, leaving the details to the reader. First, define
\begin{eqnarray*}
X_{n,p}(t) &:=& n^{\frac{p-2}{4}} \sum_{j=1}^{\lfloor nt \rfloor} \left[ \left|\Delta_n M\left(\frac{j}{n}\right) \right|^{\frac{p}{2}} -\mu_{\frac{p}{2}} \{\Delta_n A\left(\frac{j}{n}\right)\}^{p/4}  \right],\\
\langle X_{n,p} \rangle_t &:=& \left(\mu_{p}-\mu_{\frac{p}{2}}^2\right) n^{\frac{p}{2}-1}
\sum_{j=1}^{\lfloor nt \rfloor} E\left[ \left\{\Delta_n A\left(\frac{j}{n}\right) \right\}^{\frac{p}{2}} | \cF_{\frac{j-1}{n}}\right],\\
V_{n,p}(t) &:=& n^{\frac{p}{2}-1} \sum_{j=1}^{\lfloor nt \rfloor} \left|\Delta_n M\left(\frac{j}{n}\right) \right|^{p}.
\end{eqnarray*}

\begin{thm}[New numerical scheme]\label{thm:num_scheme_new}
Assume that $A(t) \to\infty$ and $\cV_{p}(t) \to\infty$ as $t\to\infty$, with $\cV_{p}$ continuous, for some $p>4$.
Under all the conditions of Lemma \ref{lem:pvariation2}, including the finiteness of the $(2p)^{th}$ moment of $M$,
there is a standard Brownian motion $W$ independent of $\cV_{p}$ such that
$(X_{n,p},\langle X_{n,p}\rangle,V_{n,p}) \stackrel{\mathcal{C}}{\rightsquigarrow}
( W\circ \cA_p, \cA_p, \cV_{p})$,
where  $\cA_p = \dfrac{\left(\mu_{p}-\mu_{\frac{p}{2}}^2\right)}{\mu_{p}}\cV_{p}$.

\end{thm}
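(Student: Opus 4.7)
The idea is to follow the template of the proof of Theorem \ref{thm:num_scheme} verbatim, with the exponent $4$ replaced by the general $p>4$ and Lemma \ref{lem:pvariation2} applied at both exponents $p$ and $2p$. Throughout, write $\sigma_p^2:=\mu_p-\mu_{p/2}^2$ for the variance of $|Z|^{p/2}$ with $Z\sim N(0,1)$, so that $\cA_p=\sigma_p^2\cU_p$.

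First I would handle the two ``bracket'' terms. Lemma \ref{lem:pvariation2} at exponent $p$ yields $V_{n,p}\stackrel{Pr}{\to}\cV_p$, and Proposition \ref{prop:Cincreasing} upgrades this to $V_{n,p}\stackrel{\mathcal{C}}{\rightsquigarrow}\cV_p$. Then I would write
\[
V_{n,p} \;=\; \tfrac{\mu_p}{\sigma_p^2}\langle X_{n,p}\rangle + \cZ_{n,p},\qquad
\cZ_{n,p}(t) := n^{p/2-1}\sum_{j=1}^{\nt}\bigl[|\Delta_n M(j/n)|^p - E\{|\Delta_n M(j/n)|^p\mid\cF_{(j-1)/n}\}\bigr],
\]
and observe that $\cZ_{n,p}$ is a square integrable martingale whose compensator, after using $M=B\circ A$ to compute the conditional $(2p)^{\text{th}}$ moment and dropping a negative term via Jensen's inequality, is bounded above by $\mu_{2p}\,n^{p-2}\sum_{j=1}^{\nt} E\{(\Delta_n A(j/n))^{p}\mid\cF_{(j-1)/n}\}$. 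Consequently $n\,E\{\langle\cZ_{n,p}\rangle_t\}\le\mu_{2p}\,E\{U_{2p,1/n}(t)\}\to\mu_{2p}\,E\{\cU_{2p}(t)\}<\infty$ by Lemma \ref{lem:pvariation2} at exponent $2p$, which invokes the finiteness of the $(2p)^{\text{th}}$ moment of $M$. Lemma \ref{lem:lenglart} then yields $\sup_{s\le t}|\cZ_{n,p}(s)|\stackrel{Pr}{\to}0$, so combining with $V_{n,p}\to\cV_p$ forces $\langle X_{n,p}\rangle\stackrel{\mathcal{C}}{\rightsquigarrow}\cA_p$.

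For the CLT on $X_{n,p}$ itself, I would use the representation $M=B\circ A$ with $B$ a Brownian motion independent of $A$ to realize $\Delta_n M(j/n)\stackrel{Law}{=}\sqrt{\Delta_n A(j/n)}\,Z_j$ with $(Z_j)_{j\ge1}$ iid $N(0,1)$ independent of $A$, whence
\[
X_{n,p}(t) \;=\; \sni\sum_{j=1}^{\nt}\{n\,\Delta_n A(j/n)\}^{p/4}\bigl(|Z_j|^{p/2}-\mu_{p/2}\bigr).
\]
By the mean value theorem for integrals together with the continuity of $a$, $\{n\,\Delta_n A(j/n)\}^{p/4}$ matches $a((j-1)/n)^{p/4}$ up to an error which is negligible inside the joint characteristic function expansion performed at the end of the proof of Theorem \ref{thm:num_scheme}. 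Setting $\eta_j:=\sigma_p^{-1}(|Z_j|^{p/2}-\mu_{p/2})$, iid with mean $0$ and unit variance, independent of $a$, Proposition \ref{prop:stochInt} applied with the continuous volatility $\sigma(s):=\sigma_p\,a(s)^{p/4}$ delivers joint convergence to $\bigl(\int_0^\cdot\sigma(s)dB_s,\cA_p,\cV_p\bigr)=(W\circ\cA_p,\cA_p,\cV_p)$, with $W$ a Brownian motion independent of $\cA_p$. The only genuine obstacle is the uniform control of the error in the replacement $\{n\,\Delta_n A(j/n)\}^{p/4}\leftrightarrow a((j-1)/n)^{p/4}$ inside that characteristic function expansion, which rests on the continuity of $a$ together with the uniform integrability furnished by the $(2p)^{\text{th}}$ moment hypothesis on $M$ via Lemma \ref{lem:pvariation2} at exponent $2p$.
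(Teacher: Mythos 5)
Your proof is correct and is precisely the argument the paper intends: Theorem \ref{thm:num_scheme_new} is stated without proof, with the remark that the $p=4$ scheme of Theorem \ref{thm:num_scheme} ``extends readily to higher powers,'' and your write-up carries out exactly that template --- the decomposition $V_{n,p}=\frac{\mu_p}{\mu_p-\mu_{p/2}^2}\langle X_{n,p}\rangle+\cZ_{n,p}$, the Lenglart/Lemma \ref{lem:pvariation2}-at-exponent-$2p$ control of $\cZ_{n,p}$, and the reduction of $X_{n,p}$ via $M=B\circ A$ to Proposition \ref{prop:stochInt} with $\sigma(s)=(\mu_p-\mu_{p/2}^2)^{1/2}a(s)^{p/4}$. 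No substantive discrepancy to report.
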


\begin{rem}\label{rem:BNS}
In their analysis of the asymptotic properties of realized volatility error in investment returns,
\cite{Barndorff-Nielsen/Shephard:2002} assumed that $M_t = \int_0^t \sigma_u dW_u$,
with square integrable $\sigma$ independent of Brownian motion $W$ ---
$\sigma_u^2$ is known as the spot volatility or instantaneous volatility at time $u$.
Since $A_t = \int_0^t \sigma_u^2 du$ is continuous, it follows
from the proof of Proposition \ref{prop:stochInt} that $M$ can also be written as $M=B\circ A$,
where $B$ is a Brownian motion independent of $A$.
They also said that one could consider $N(t)=\mu t+\beta A(t)$ but mentioned that it could be difficult to obtain $p$-variation convergence, specially if $\beta\neq 0$. From our Corollary \ref{cor:num_scheme2}, we see that having this extra term
does not influence the value of the limit $W\left(\dfrac{2}{3}\cV_{4}\right)$ for $Y_n$,
confirming that they can be ignored when estimating realized volatility,
at least below the order of third moments. Note that their setting is a modification of the models introduced in \citet{Ane/Geman:2000}
by adding the drift term $\mu$, while setting business time to $\tau=A$ and scaling parameter to $\gamma=1$.
Note also that from Theorem \ref{thm:num_scheme}, \citet[Theorem 1]{Barndorff-Nielsen/Shephard:2002} can be restated as follows: as $n\to\infty$,
$\disp
\dfrac{X_n(1)}{ \left\{2n\sum_{j=1}^n \{\Delta_n A\left(\frac{j}{n}\right)\}^2\right\}^{\frac{1}{2}}} \stackrel{Law}{\to } N(0,1)$
and $\disp n\sum_{j=1}^n \{\Delta_n A\left(\frac{j}{n}\right)\}^2 \stackrel{a.s.}{\to } \int_0^1 \sigma^4(s)ds := \cU_{4}(1)$.
Furthermore, $\disp 2 n\sum_{j=1}^\nt \{\Delta_n A\left(\frac{j}{n}\right)\}^2  - \langle X_n \rangle _t
$ is a martingale that converges to $0$, due to the continuity of $A$ and the fact that $\sigma^4$ is locally integrable.
As a result, $\cV_{4} =3 \cU_{4}$.
Their result is therefore an upshot of Theorem \ref{thm:num_scheme}.
More generally, \citet{Barndorff-Nielsen/Graversen/Jacod/Podolskij/Shephard:2006}
prove a CLT for sequences of continuous $\dR^d$-valued semimartingales $Y$
of the following general form --- we stick to the case $d=1$ here for the sake of simplicity:
$$
Y(t)=Y(0) + \int_0^t a_sds + \int_0^t \sigma_{s-} dW(s),
$$
with $W$ a standard Brownian motion, $a$ bounded predictable and $\sigma$ $D$-valued.
For any pair $G$ and $H$ of continuous real-valued functions with at most polynomial growth, the sequence of approximations
$$
X_n(G,H)_t = n^{-1} \sum_{j=1}^{\lfloor nt \rfloor} G\left\{\frac{1}{2}\Delta_n Y\left(\frac{j}{n}\right) \right\}\cdot H\left\{\frac{1}{2}\Delta_n Y((j+1)/n) \right\}
$$
are first shown to obey a Law of Large Numbers :
$$
X_n(G,H)_t \stackrel{Pr}{\to} X(G,H)_t :=\int_0^t \rho_{\sigma_s}(G)\rho_{\sigma_s}(H)ds
$$
where $\rho_{\sigma_s}(G):=E\{G(Z)\}$ where $Z\sim N(0,\sigma_s)$. Under some additional restrictions on both
the stochastic structure of $\sigma_s$ and the smoothness of $G$ and $H$, a CLT also ensues --- actually in the sense of stable convergence:
$$
\frac{1}{2}\{X_n(G,H)_t-X(G,H)_t\} \stackrel{Law}{\to} U(G,H)_t
$$
where $U(G,H)_t$ is a stochastic integral with respect to another Brownian motion independent of the ambient filtration.
A functional version of this result should ensue from our Theorem \ref{thm:num_scheme_new} through the same type of arguments,
when  $G$ and $H$ have at most polynomial growth. We do not pursue this here.
\end{rem}

\section{Conclusion}

We have shown that under  weak conditions involving compensators, one can get a CLT for general martingales, and the limiting process is a mixture of a Brownian motion with the limiting compensator of the sequence of martingales. These conditions are easy to verify and are general enough to be applicable to a wide range of situations.

\appendix


\section{Auxiliary results}\label{app:muv}


\subsection{Some useful results}

\begin{lem}[Lenglart's inequality]\label{lem:lenglart}
Let $X$ be an $\dF$-adapted $D$-valued process. Suppose that $Y$ is optional, non-decreasing,
and that, for any bounded stopping time $\tau$, $E|X(\tau)|\leq E\{Y(\tau)\}$.
Then for any stopping time $\tau$ and all $\varepsilon,\eta >0$,
\begin{itemize}
\item[{a)}]
if $Y$ is predictable,
\begin{equation}\label{eng1}
P(\sup_{s\leq \tau}|X(s)|\geq\varepsilon) \leq \frac{\eta}{\varepsilon} +
P(Y(\tau)\geq\eta).
\end{equation}

\item[b)]
if $Y$ is adapted,
\begin{equation}\label{eng2}
P(\sup_{s\leq \tau}|X(s)|\geq\varepsilon) \leq \frac{1}{\varepsilon}
\left[\eta+ E\left\{J_\tau(Y) \right\}\right] +
P(Y(\tau)\geq\eta).
\end{equation}
\end{itemize}
\end{lem}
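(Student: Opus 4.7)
The plan is a standard comparison of first passage times. By replacing $\tau$ with $\tau\wedge N$ and letting $N\uparrow\infty$ (using monotone convergence on both $\sup_{s\le\tau}|X(s)|$ and $Y(\tau)$) I first reduce to the case of a bounded stopping time $\tau$. Then I introduce the two first-passage times
$$
T_\varepsilon=\inf\{t\ge0:|X(t)|\ge\varepsilon\},\qquad S_\eta=\inf\{t\ge 0:Y(t)\ge\eta\},
$$
both of which are stopping times since $X$ is c\`adl\`ag and $Y$ is optional and non-decreasing. The basic decomposition is
$$
\Bigl\{\sup_{s\le\tau}|X(s)|\ge\varepsilon\Bigr\}=\{T_\varepsilon\le\tau\}\subseteq\{T_\varepsilon\le\tau,\;S_\eta>\tau\}\cup\{S_\eta\le\tau\}.
$$
The easy piece is $P(S_\eta\le\tau)\le P(Y(\tau)\ge\eta)$, which is immediate because $Y$ is non-decreasing and $Y(S_\eta)\ge\eta$ by right-continuity on $\{S_\eta<\infty\}$.

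For the other piece, on $\{T_\varepsilon\le\tau<S_\eta\}$ we have $T_\varepsilon\wedge\tau\wedge S_\eta=T_\varepsilon$ and $|X(T_\varepsilon)|\ge\varepsilon$, so Markov's inequality combined with the hypothesis $E|X(\sigma)|\le EY(\sigma)$ at the bounded stopping time $\sigma:=T_\varepsilon\wedge\tau\wedge S_\eta$ yields
$$
P(T_\varepsilon\le\tau,\;S_\eta>\tau)\le \frac{1}{\varepsilon}E|X(\sigma)|\le\frac{1}{\varepsilon}EY(\sigma)\le\frac{1}{\varepsilon}EY(\tau\wedge S_\eta).
$$
Everything now hinges on bounding $Y(\tau\wedge S_\eta)$, and this is where the two cases diverge.

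For part (b), on $\{\tau<S_\eta\}$ we have $Y(\tau\wedge S_\eta)=Y(\tau)<\eta$, while on $\{S_\eta\le\tau\}$ we write $Y(S_\eta)=Y(S_\eta-)+\Delta Y(S_\eta)\le\eta+J_\tau(Y)$, using the definition of $S_\eta$ on the left limit and that $S_\eta$ is a jump time bounded by $\tau$. Thus $Y(\tau\wedge S_\eta)\le\eta+J_\tau(Y)$ pointwise, which gives \eqref{eng2}. For part (a), when $Y$ is predictable the time $S_\eta$ is predictable (first passage of a predictable non-decreasing process through a level), hence admits an announcing sequence $\sigma_n\uparrow S_\eta$ with $\sigma_n<S_\eta$ on $\{S_\eta>0\}$, along which $Y(\sigma_n)<\eta$. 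Applying the same Markov/assumption step at $T_\varepsilon\wedge\tau\wedge\sigma_n$ gives the bound $\eta/\varepsilon$, and passing to the limit $n\to\infty$ (using $\{\tau<S_\eta\}=\bigcup_n\{\tau\le\sigma_n\}$) produces \eqref{eng1}.

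The main obstacle is simply keeping straight the distinction between the predictable and the adapted case: when $Y$ is predictable we can \emph{foresee} the passage time $S_\eta$ and stop strictly before the jump that takes $Y$ above $\eta$, which is why the clean bound $\eta/\varepsilon$ suffices; when $Y$ is merely adapted we cannot, and the cost of the single unobservable jump at $S_\eta$ is what produces the extra $E\{J_\tau(Y)\}/\varepsilon$ term. Once that distinction is organized via the announcing sequence versus the jump bound, the rest is routine.
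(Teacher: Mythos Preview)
The paper does not actually prove this lemma: its entire proof is a one-line citation to \citet[Lemma~I.3.30]{Jacod/Shiryaev:2003}. Your argument via the first-passage times $T_\varepsilon$ and $S_\eta$, Markov's inequality at the bounded stopping time $T_\varepsilon\wedge\tau\wedge S_\eta$, and then either the announcing sequence (predictable case) or the jump bound $Y(S_\eta)\le Y(S_\eta-)+J_\tau(Y)\le\eta+J_\tau(Y)$ (adapted case), is precisely the standard proof one finds in that reference, and it is correct as outlined.
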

\begin{proof} See \citet[Lemma I.3.30]{Jacod/Shiryaev:2003}.
\end{proof}

\vfil

Proving $\cJ_1$-tightness generally involves the following lemma.

\begin{lem}[Aldous's criterion]\label{lem:aldous}
Let $\{X_n\}_{n\ge 1}$ be a sequence of $D$-valued processes. Suppose that for any sequence of bounded discrete stopping times
${\left\{{\tau_n}\right\}_{n\ge 1}}$ and for any sequence ${\left\{{\delta_n}\right\}_{n\ge 1}}$ in $[0,1]$ converging to $0$,
the following condition holds, for every $T>0$:
(A) $X_n((\tau_n+\delta_n)\wedge T)- X_n(\tau_n)\stackrel{Law}{\to} 0$. Then, ${\left\{{X_n}\right\}_{n\ge 1}}$ is $\cJ_1$-tight,
if either of the two following conditions holds:
\begin{enumerate}
\item $\{X_n(0)\}_{n\ge 1}$ and $(J_T(X_n))_{n\ge 1}$ are tight;
\item $\{X_n(t)\}_{n\ge 1}$ is tight for any $t\in [0,T]$.
 \end{enumerate}

\end{lem}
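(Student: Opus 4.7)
The plan is to reduce the statement to the standard characterization of $\cJ_1$-tightness on each compact time interval $[0,T]$, namely: a sequence $\{X_n\}$ is tight in $D[0,T]$ if and only if its one-dimensional marginals form a tight family of random variables in $\dR^d$ (compact containment) and the Skorokhod modulus $w'(X_n,\delta,T)$ converges to $0$ in probability as $\delta\downarrow 0$, uniformly in $n$. This reduction, plus a standard patching argument on a sequence $T_k\uparrow\infty$, upgrades tightness in each $D[0,T]$ to tightness in $D[0,\infty)$; see \citet[Theorem VI.3.21]{Jacod/Shiryaev:2003} or \citet[Theorem 3.7.2]{Ethier/Kurtz:1986}.

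The core step is to show that condition (A) implies the Skorokhod modulus condition $w'(X_n,\delta,T)\stackrel{Pr}{\to} 0$. This is the essence of Aldous's classical argument: for fixed $\eta>0$ I would iteratively define a sequence of stopping times $\sigma_n^{(0)}=0$, $\sigma_n^{(k+1)}=\inf\{t\ge \sigma_n^{(k)}:\ |X_n(t)-X_n(\sigma_n^{(k)})|\ge \eta\}\wedge T$, marking successive instants of large oscillation. If $w'$ failed to vanish in probability, one could extract subsequences and step-sizes $\delta_n\downarrow 0$ along which infinitely many of these stopping times accumulate in intervals of length $\delta_n$ with positive probability; applied to a suitable choice of $\tau_n=\sigma_n^{(k_n)}$, this contradicts (A). The rigorous execution is done in \citet[Theorem VI.4.5]{Jacod/Shiryaev:2003} and constitutes the main technical obstacle.

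Once the modulus condition is secured, I would verify compact containment of $\{X_n(t)\}$ for each $t\in[0,T]$ under either (1) or (2). Under (2) the conclusion is immediate. Under (1), I combine tightness of $X_n(0)$ and of $J_T(X_n)$ with the modulus control to control $X_n(t)$: on any subdivision $0=r_0<r_1<\cdots<r_m=t$ with mesh finer than the (random) Skorokhod modulus at level $\eta$, each increment $X_n(r_{j+1}-)-X_n(r_j)$ is at most $\eta$, while each jump is bounded above by $J_T(X_n)$, so $|X_n(t)|\le |X_n(0)|+m\,\eta+m\,J_T(X_n)$ with $m$ controlled via $w'$. Tightness of the three ingredients then yields tightness of $\{X_n(t)\}$.

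Assembling these pieces — Aldous's passage from (A) to the modulus condition, compact containment under (1) or (2), and the standard Jacod–Shiryaev characterization — delivers $\cJ_1$-tightness. The main difficulty is the Aldous extraction step in paragraph two; the remainder is bookkeeping, and we defer the detailed arguments to \citet[Theorems VI.3.21 and VI.4.5]{Jacod/Shiryaev:2003}.
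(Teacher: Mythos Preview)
Your proposal is correct and aligns with the paper's approach: the paper does not give an independent proof but simply cites \citet{Aldous:1978}, \citet[Theorem VI.4.5]{Jacod/Shiryaev:2003} and \citet[Theorem 3.8.6]{Ethier/Kurtz:1986}, and your sketch accurately outlines the argument contained in those references (Aldous's stopping-time extraction for the modulus condition, plus compact containment under either hypothesis). Nothing more is needed here.
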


\begin{proof} See \citet{Aldous:1978}, \citet[Theorem VI.4.5]{Jacod/Shiryaev:2003}
or for several variants see \citet[Theorem 3.8.6]{Ethier/Kurtz:1986}.
Note that condition (1) or condition (2) are  necessary for $\cJ_1$-tightness, but not condition (A).
\end{proof}

Now comes the main result about tightness, stated for real-valued processes. Before stating it,
set $J(x_1,x_2,x_3) = |x_2-x_1|\wedge |x_2-x_3|$, where $x\wedge y =
\min(x,y)$. For $x \in D$, set
$$
w_{\cJ_1}(x,\delta,T) = \sup_{0 \le t_1 < t_2 < t_3\le T, \; t_3-t_1<\delta}
J\{x(t_1),x(t_2),x(t_3)\}.
$$
It follows from \cite[Remarques:I.6]{Rebolledo:1979} and \cite[VII, Lemma 6.4]{Parthasarathy:1967}
that, for any $T>0$ and $\delta>0$,
\begin{equation}\label{eq:partha}
  J_T(x) \le \omega_\cC(x,\delta,T) \le  J_T(x) + 2\omega_{\cJ_1}(x,\delta,T).
\end{equation}
\begin{rem}\label{rem:partha}
It follows from \eqref{eq:partha} that $X_n$ is $\cC$-tight iff $X_n$ is $\cJ_1$-tight and $J_T(X_n)\stackrel{Law}{\longrightarrow}0$.
For if $\epsilon>0$ is given, then, for any $\delta>0$,
$$
P(\omega_\cC(X_n,\delta,T) > \epsilon) \le P(J_T(X_n) >\epsilon/2) + P(\omega_{\cJ_1}(X_n,\delta,T)>\epsilon/4)\stackrel{n\to\infty}{\longrightarrow} 0.
$$
\end{rem}

\begin{thm}\label{thm:M1tightness}
Let $M_n$ be a sequence of $D$-valued square integrable $\mathbb{F}$-martingales with  $M_n(0)= 0$,
with quadratic variation $[M_n]$ and compensator $A_n = \langle M_n \rangle $.
\begin{itemize}
\item[a)]
Assume that for any $t>0$, $\limsup_{n\to\infty} E\{J_t^2(M_n)\}=0$ holds.
Then $M_n$ is $\mathcal{C}$-tight if and only if $[M_n]$ is $\mathcal{C}$-tight.

\item[b)]
Assume that for any $t>0$, $\limsup_{n\to\infty} E\{J_t^2(M_n)\}=0$ and $J_t(A_n) \stackrel{Law}{\to} 0$ hold.
Then the $\mathcal{C}$-tightness of $[M_n]$ implies that of $A_n$.

\item[c)]
Assume that for any $t>0$, $J_t(M_n) \stackrel{Law}{\to} 0$ hold.
Then the $\mathcal{C}$-tightness of $A_n$ implies that of both $M_n$ and $[M_n]$.

\end{itemize}
\end{thm}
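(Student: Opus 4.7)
The plan for each of the three parts is to verify Aldous's tightness criterion (Lemma \ref{lem:aldous}, condition (1)) for the process in question, and then upgrade $\cJ_1$-tightness to $\cC$-tightness via Remark \ref{rem:partha} using the appropriate largest-jump hypothesis. The mainstay for Aldous's condition (A) on shifted increments over $[\tau_n,\tau_n+\delta_n]$, with bounded stopping times $\tau_n$ and $\delta_n\downarrow 0$, will be Lenglart's inequality (Lemma \ref{lem:lenglart}), applied to the shifted process against a suitable compensator; the identities $EM_n^2(\sigma)=E[M_n]_\sigma=EA_n(\sigma)$ for bounded stopping times $\sigma$ underpin all such comparisons.

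For part (c), to get $M_n$ $\cC$-tight, I would apply Lenglart (a) to $(M_n(\tau_n+t)-M_n(\tau_n))^2$ against the predictable compensator $A_n(\tau_n+t)-A_n(\tau_n)$: $\cC$-tightness of $A_n$ forces this shifted compensator to vanish in probability over $[0,\delta_n]$, giving Aldous (A) for $M_n$. With $J_T(M_n)\to 0$ in law by hypothesis, Remark \ref{rem:partha} concludes $\cC$-tightness of $M_n$. For $[M_n]$, I would use Itô's identity $[M_n]_t=M_n(t)^2-2\int_0^t M_n(s-)\,dM_n(s)$: the quadratic term is handled by $\cC$-tightness of $M_n^2$, while Lenglart (a) controls the shifted stochastic integral through its predictable compensator $4\int_{\tau_n}^{\tau_n+\delta_n} M_n^2(s-)\,dA_n(s)\le\sup_{s\le T}M_n^2(s)\cdot (A_n(\tau_n+\delta_n)-A_n(\tau_n))$, a product of a tight quantity and one vanishing in probability. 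The identity $J_T([M_n])=J_T^2(M_n)\to 0$ in law then closes the argument.

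For part (a), the direction $[M_n]$ $\cC$-tight implies $M_n$ $\cC$-tight is analogous, using Lenglart (b) with the adapted compensator $[M_n]_{\tau_n+t}-[M_n]_{\tau_n}$; the extra correction term $EJ_{\delta_n}(Y_n)\le EJ_T([M_n])=EJ_T^2(M_n)$ vanishes by hypothesis. The converse direction is the main obstacle, because $E[M_n]_T$ is not assumed bounded, so direct $L^2$ estimates on the stochastic integral are blocked. To overcome this I would truncate at $\sigma_n^R=\inf\{s:|M_n(s)|>R\}$: $\cC$-tightness of $M_n$ gives $P(\sigma_n^R\le T)\to 0$ as $R\to\infty$ uniformly in $n$. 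The stopped process $M_n^R$ satisfies $|M_n^R(s-)|\le R$, so $E(M_n^R(t))^2\le 2R^2+2EJ_T^2(M_n)$ is uniformly bounded and $(M_n^R)^2$ is uniformly integrable. Aldous (A) for $[M_n^R]$ then follows from Itô's identity via the $L^2$-isometry bound $E\left(\int_{\tau_n}^{\tau_n+\delta_n} M_n^R(s-)\,dM_n^R(s)\right)^2\le R^2\cdot E\left([M_n^R]_{\tau_n+\delta_n}-[M_n^R]_{\tau_n}\right)=R^2\cdot E\left((M_n^R(\tau_n+\delta_n))^2-(M_n^R(\tau_n))^2\right)$, which vanishes by uniform integrability combined with $\cC$-tightness of $M_n^R$. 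Hence $[M_n^R]$ is $\cC$-tight for each $R$, and letting $R\to\infty$ transfers $\cC$-tightness to $[M_n]$ on any $[0,T]$.

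For part (b), since $A_n$ is the predictable compensator of $[M_n]$, we have $EA_n(\sigma)=E[M_n]_\sigma$ for bounded $\sigma$. I would apply Lenglart (b) to the non-decreasing process $A_n(\tau_n+t)-A_n(\tau_n)$ against the adapted compensator $[M_n]_{\tau_n+t}-[M_n]_{\tau_n}$, exactly as in the easy direction of (a): the correction $EJ_{\delta_n}(Y_n)\le EJ_T^2(M_n)$ vanishes by hypothesis, and the remaining probability vanishes by $\cC$-tightness of $[M_n]$, yielding Aldous (A) for $A_n$. Combined with the hypothesis $J_T(A_n)\to 0$ in law, Remark \ref{rem:partha} concludes.
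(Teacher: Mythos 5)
Your proof is correct. Parts (b), the easy direction of (a), and the $\cC$-tightness of $M_n$ in part (c) follow the paper's own route essentially verbatim: Lenglart's inequality applied to the shifted process against the appropriate (predictable or merely adapted) dominating increasing process, Aldous's criterion, then Remark \ref{rem:partha}. You genuinely diverge in the two remaining places. For the direction ``$M_n$ $\cC$-tight $\Rightarrow$ $[M_n]$ $\cC$-tight'' of (a), the paper applies Lenglart (b) once more, taking $X_n(t)=[M_n]_{\tau_n+t}-[M_n]_{\tau_n}$ dominated by the adapted process $Y_n(t)=\sup_{0\le s\le t}\{M_n(\tau_n+s)-M_n(\tau_n)\}^2$, bounding the correction term by $E\{J_{T+1}^2(M_n)\}$ and the tail by $P\{\omega_\cC(M_n,\delta_n,T+1)>\sqrt{\eta}\}$; your localization at $\sigma_n^R$, It\^o's identity and the $L^2$-isometry reach the same end in more steps, but each step is elementary and, notably, you never need to estimate the jumps of a running supremum of squared increments (a point where the paper's bound $J_{\delta_n}(Y_n)\le J_{T+1}^2(M_n)$ requires some care, since a jump of $Y_n$ is a difference of squares rather than the square of a jump of $M_n$). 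For $[M_n]$ in part (c), the paper simply applies Lenglart (a) to the increments of $[M_n]$ against its predictable compensator $A_n$, using $E\{[M_n]_\sigma\}=E\{A_n(\sigma)\}$ for bounded stopping times; this is shorter than your decomposition $[M_n]_t=M_n^2(t)-2\int_0^t M_n(s-)\,dM_n(s)$, which obliges you to control a stochastic integral whose compensator you can only bound by the product of a tight quantity and one vanishing in probability. Both of your detours are valid; the only imprecision worth noting is that $|M_n^R(s-)|\le R$ can fail for $s>\sigma_n^R$ because of the overshoot at $\sigma_n^R$, but this is harmless since $d[M_n^R]$ does not charge $(\sigma_n^R,\infty)$.
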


\begin{rem}
If every $M_n$ is continuous, then $M_n$ is $\mathcal{C}$-tight if and only if $A_n$ is $\mathcal{C}$-tight, by statement a),
since both $J_t(M_n)=0$ and $[M_n]=A_n$ then hold --- this goes back to \citet{Rebolledo:1979}.
Without the continuity assumption on $M_n$, this equivalence no longer holds.
Note also that c) follows from \cite[Theorem 4.13]{Jacod/Shiryaev:2003}.
\end{rem}
\begin{proof}  The idea of the proof is to show $\cJ_1$-tightness, and then use Remark \ref{rem:partha}.
For statement a), suppose first that $M_n$ is $\cC$-tight. Set $ X_n(t)= [M_n]_{t+\tau_n} - [M_n]_{\tau_n} $ and $Y_n(t)=  {\sup_{0\leq s\leq t}}\{M_n (s+\tau_n)-M_n(\tau_n)\}^2$,
 where $\tau_n$ is a stopping time uniformly bounded by $T$ for any $n$. Then, for any bounded stopping time $\tau$, $E\{X_n(\tau)\} \le E\{Y_n(\tau)\}$.
Let $\delta$ be a bounded stopping time.
By \eqref{eng2} of Lemma \ref{lem:lenglart}, we have, for any $\varepsilon,\eta >0$,
\begin{multline}
 P([M_n]_{\tau_n+\delta_n}-[M_n]_{\tau_n}\geq\varepsilon) \label{eq:eng3} \\
 \le  \frac{\eta}{\varepsilon} + \frac{1}{\varepsilon} E\{J_{\delta_n}(Y_n)\} +
P(Y_n(\delta_n))\geq\eta) \\
\le \frac{\eta}{\varepsilon} + \frac{1}{\varepsilon} E\left\{J_{T+1}^2(M_n)\right\} +
P\left\{ \omega_\cC(M_n,\delta_n,T+1) > \sqrt\eta \right\}.
\end{multline}
Since $M_n$ is $\mathcal{C}$-tight, it follows that $P\left\{ \omega_\cC(M_n,\delta_n,T+1) > \sqrt\eta \right\} \to 0$
as $n\to \infty$. Set  $\eta = \varepsilon^2$. Then
$\limsup_{n\to\infty} P\left\{[M_n]_{\tau_n+\delta_n}-[M_n]_{\tau_n}\geq\varepsilon\right\}\le \varepsilon$,
showing that $[M_n]$ meets both conditions (A) and (1) of Lemma \ref{lem:aldous}, since $J_T([M_n])=J_T^2(M_n)$.
Hence $[M_n]$ is $\cJ_1$-tight. The fact that $[M_n]$ is $\cC$-tight follows from Remark \ref{rem:partha}  and $J_T([M_n])\stackrel{Law}{\longrightarrow} 0$. To complete the proof of a), assume now that $[M_n]$ is $\cC$-tight. Using Lemma \ref{lem:lenglart} yields
\begin{multline*}
P\left\{ |M_n(\tau_n+\delta_n)-M_n(\tau_n)|\geq\varepsilon \right\}  \\
 \le  \frac{\eta}{\varepsilon^2} + \frac{1}{\varepsilon^2} E\{J_{\delta_n}([M_n]_{\tau_n+\cdot}-[M_n]_{\tau_n})\} +
P\left\{[M_n]_{\tau_n+\delta_n}-[M_n]_{\tau_n}>\eta\right\} \\
\le  \frac{\eta}{\varepsilon^2} + \frac{1}{\varepsilon^2} E\{J_{T+1}([M_n])\} +
P\left\{ \omega_\cC([M_n],\delta_n,T+1) > \eta \right\}.
\end{multline*}
Choosing $\eta = \varepsilon^3$,
both conditions (A) and (1) of Lemma \ref{lem:aldous} are met and $M_n$ is $\cJ_1$-tight.   Since $J_T(M_n) \stackrel{Law}{\longrightarrow} 0$, it follows from Remark \ref{rem:partha} that $M_n$ is $\cC$-tight.
For statement b), \eqref{eq:eng3} becomes
$$
P(A_n(\tau_n+\delta_n)-A_n(\tau_n)\geq\varepsilon)   
\le\frac{\eta}{\varepsilon} + \frac{1}{\varepsilon} E\{J_{T+1}^2(M_n)\} +
P\left\{ \omega_\cC([M_n],\delta_n,T+1) > \eta \right\},
$$
showing that $A_n$ meets both conditions (A) and (1) of Lemma \ref{lem:aldous}. As a result, $A_n$ is $\cJ_1$-tight. Since $J_T(A_n) \stackrel{Law}{\longrightarrow} 0$, $A_n$ is also $\cC$-tight, using Remark \ref{rem:partha}.
Finally, for statement c), use \eqref{eng1} of Lemma \ref{lem:lenglart} instead to prove that
each of $M_n$ and $[M_n]$ meets both conditions (A) and (1) of Lemma \ref{lem:aldous}. As a result, both  $M_n$ and $[M_n]$ are $\cJ_1$-tight. Since $J_T(M_n) \stackrel{Law}{\longrightarrow} 0$,  $M_n$ is $\cC$-tight by Remark \ref{rem:partha}, and so is $[M_n]$ by a).
\end{proof}
\begin{prop}\label{prop:Cincreasing}
Let $D$-valued non-decreasing processes $A_n$ and some continuous process $A$
be such that $A_n \stackrel{f.d.d.}{\rightsquigarrow} A$. Then $A_n$ is $\mathcal{C}$-tight
and $A_n \stackrel{\mathcal{C}}{\rightsquigarrow} A$.
\end{prop}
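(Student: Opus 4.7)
My plan is to establish $\cC$-tightness of $\{A_n\}$ from monotonicity together with continuity of the limit, and then identify the weak limit by uniqueness of its finite dimensional distributions.

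The key leverage I would exploit is the monotonicity of the $A_n$'s: for any non-decreasing c\`adl\`ag function $x$ on $[0,T]$ and the equally spaced partition $t_k = k\delta/2$, $k=0,\ldots,K$, with $K = \lceil 2T/\delta\rceil$, both the continuous modulus $\omega_\cC(x,\delta,T)$ and the largest jump $J_T(x)$ are dominated by $3\max_{0\le k<K}\{x(t_{k+1})-x(t_k)\}$, since any two points within $\delta$ span at most three consecutive subintervals and any jump is contained in a single subinterval. This expresses both quantities, evaluated at $A_n$, as a continuous function of finitely many coordinates of $A_n$.

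Given $\epsilon, T > 0$, I would first exploit continuity of $A$ to pick a deterministic $\delta > 0$ (and its associated partition) such that $P\{\omega_\cC(A,\delta,T) > \epsilon/4\} < \epsilon$. The finite dimensional convergence $A_n \stackrel{f.d.d.}{\rightsquigarrow} A$ at the points $(t_0,\ldots,t_K)$ together with the continuous mapping theorem transfers this bound asymptotically to $A_n$, yielding
\[
\limsup_n P\{\omega_\cC(A_n,\delta,T) > \epsilon\} \le \epsilon, \qquad \limsup_n P\{J_T(A_n) > \epsilon\} \le \epsilon.
\]
Marginal tightness of $A_n(t)$ at each fixed $t$ is immediate from the f.d.d.\ hypothesis, so $\cJ_1$-tightness of $A_n$ follows from Aldous's criterion (Lemma \ref{lem:aldous}), and the vanishing-jumps control then promotes this to $\cC$-tightness via Remark \ref{rem:partha}. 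Finally, any subsequential $\cC$-limit has continuous trajectories and the same finite dimensional distributions as $A$, hence the same law; therefore $A_n \stackrel{\cC}{\rightsquigarrow} A$.

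I expect no serious obstacle in this argument. The only point requiring care is that the partition must be fixed deterministically---so that the f.d.d.\ hypothesis applies at its points---while the bound on $A$'s maximum partition-increment holds only in probability; this bridging is handled cleanly by the continuous mapping theorem, which packages everything at the level of laws.
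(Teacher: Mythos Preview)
Your argument is correct and takes essentially the same route as the paper: monotonicity bounds $\omega_\cC(A_n,\delta,T)$ by a maximum of finitely many partition increments, f.d.d.\ convergence transfers this to $A$, and continuity of $A$ closes the loop. The only wrinkle is that your detour through Aldous's criterion and Remark~\ref{rem:partha} is superfluous---the bound $\limsup_n P\{\omega_\cC(A_n,\delta,T)>\epsilon\}\le\epsilon$ together with marginal tightness is already the direct characterization of $\cC$-tightness, so there is no need to pass through $\cJ_1$-tightness first (and your invocation of Aldous is slightly elliptical anyway, since condition~(A) is never stated---it follows from the $\omega_\cC$ bound, but that bound is the stronger conclusion).
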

\begin{proof} For any fixed $T>0$ and $\delta>0$ there holds, with $i$ running only through the non-negative integers,
\begin{multline*}
 \limsup_{n\to\infty} P(\omega_\cC(A_n, \delta,T) \ge \epsilon) \le  \limsup_{n\to\infty}P\left(3 \max_{0 \le i \le T/\delta }
\sup_{ i\delta \le s \le T\wedge (i+1)\delta}|A_n(s)-A_n(i\delta)| \ge \epsilon \right)\\
\le
\limsup_{n\to\infty}P\left( \bigcup_{0 \le i \le T/\delta }
\left\{ |A_n(T\wedge (i+1)\delta)-A_n(i\delta)| \ge  \epsilon/3 \right\} \right)\\
\le
P\left( \bigcup_{0 \le i \le T/\delta}\left\{ |A(T\wedge (i+1)\delta)-A(i\delta)|
\ge  \epsilon/3 \right\} \right)
\le P(\omega_\cC(A, \delta,T) \ge  \epsilon/3),
\end{multline*}
using convergence in law. Hence $A_n$ is $\mathcal{C}$-tight since $A$ is continuous.
\end{proof}


\subsection{Expression for $\mu_v$}
Since $\disp \sum_{a\le x\le b} \{TG^2(x)-G^2(x)\}= 0$, it follows that
$\disp
\mu_v = \sum_{x\in\dZ}v(x) = \sum_{x\in\dZ}\left\{2V(x)G(x)-V^2(x)\right\}$.
One can check that
$\disp \sum_y\sum_z |z-y|V(y)V(z) = 4\sum_y y V(y) H(y)+2\sum_y yV^2(y)$,
where $\disp H(x)=\sum_{y<x}V(y)$. Note that $H\equiv 0$ outside $[a,b]$. Now, $G(x) = -2xH(x)+2\sum_{y<x }yV(y)-c$. As a result,
\begin{eqnarray*}
\mu_v &=&  -4\sum_x xV(x)H(x) +4 \sum_x \sum_{y<x}yV(y)V(x) - \sum_x V^2(x)\\
&=&  -8\sum_x xV(x)H(x) -4 \sum_x xV^2(x) - \sum_x V^2(x),\\
\end{eqnarray*}
which is the expression of Equation 10 in \cite{Dobrushin:1955}.
Furthermore, since
$\disp  0 = \sum_x V^2(x)+2 \sum_x\sum_{y<x}V(y)V(x) = \sum_x V^2(x)+2\sum_x V(x)H(x)$, one also gets
\begin{eqnarray*}
2\sum_x H^2(x) &= & 2\sum_{a\le x\le b} \sum_{a\le y<x} \sum_{a\le z<x}V(y)V(z) = 2\sum_{a\le y\le b} \sum_{a\le z \le b}V(y)V(z)\sum_{b\ge x>\max(y,z)}\\
& =&   2\sum_{a\le y\le b} \sum_{a\le z\le b}V(y)V(z)\{b-\max(y,z)\} \\
 &=&4 \sum_z\sum_{y<z} V(y)V(z)(b-z) + 2\sum_{y} V^2(y)(b-y) \\
 &=&  -4\sum_{z} zH(z)V(z) -  2\sum_{y} yV^2(y),
\end{eqnarray*}
proving the expressions $c_V^2 = 2\sum_x H^2(x)$ and $\disp 2c_V^2 = \mu_v+\sum_x V^2(x)$.
Note that $\mu_v = 2B_V$, where
$\disp B_V = \frac{1}{2}\sum_{x\in\dZ}\left\{2H(x)+V(x)\right\}^2$, as defined in \cite{Remillard:1990}.




\begin{thebibliography}{99}

\bibitem[Aldous, 1978]{Aldous:1978}
Aldous, D.~J. (1978).
  Stopping times and tightness.
   \emph{Ann. Probab.}, \textbf{6}(2), 335--340.

\bibitem[Ané and Geman, 2000]{Ane/Geman:2000}
Ané, T. and Geman, H. (2000).
  Order flow, transaction clock, and normality of asset returns.
   \emph{Journal of Finance}, \textbf{55}, 2259--2284.

\bibitem[Barndorff-Nielsen et~al.,
  2006]{Barndorff-Nielsen/Graversen/Jacod/Podolskij/Shephard:2006}
Barndorff-Nielsen, O.~E., Graversen, S.~E., Jacod, J., Podolskij, M., and
  Shephard, N. (2006).
  A central limit theorem for realised power and bipower variations of
  continuous semimartingales.
  In  \emph{From stochastic calculus to mathematical finance}, pages
  33--68. Springer, Berlin.

\bibitem[Barndorff-Nielsen and Shephard, 2002]{Barndorff-Nielsen/Shephard:2002}
Barndorff-Nielsen, O.~E. and Shephard, N. (2002).
  Econometric analysis of realized volatility and its use in estimating
  stochastic volatility models.
   \emph{J. Roy. Statist. Soc. Ser. B}, \textbf{64}, 253--280.

\bibitem[Barndorff-Nielsen and Shephard,
  2003]{Barndorff-Nielsen/Shephard:2003a}
Barndorff-Nielsen, O.~E. and Shephard, N. (2003).
  Realized power variation and stochastic volatility models.
   \emph{Bernoulli}, \textbf{9}(2), 243--265.

\bibitem[Bertacchi, 2006]{Bertacchi:2006}
Bertacchi, D. (2006).
  Asymptotic behaviour of the simple random walk on the 2-dimensional
  comb.
   \emph{Electron. J. Probab.}, \textbf{11}, 1184--1203.

\bibitem[Borodin and Salminen, 2002]{Borodin/Salminen:2002}
Borodin, A.~N. and Salminen, P. (2002).
   \emph{Handbook of Brownian motion---facts and formulae}.
  Probability and its Applications. Birkh\"auser Verlag, Basel, second
  edition.

\bibitem[Cs\'{a}ki et~al., 2009]{Csaki/Csorgo/Foldes/Revesz:2009b}
Cs\'{a}ki, E., Cs\"{o}rg\H{o}, M., F\"{o}ldes, A., and R\'{e}v\'{e}sz, P.
  (2009).
  Strong limit theorems for a simple random walk on the 2-dimensional
  comb.
   \emph{Electron. J. Probab.}, \textbf{14}(82), 2371--2390.

\bibitem[Cs\'aki and F\"{o}ldes, 1998]{Csaki/Foldes:1998}
Cs\'aki, E. and F\"{o}ldes, A. (1998).
  On asymptotic independence of partial sums.
  In  \emph{Asymptotic methods in probability and statistics ({O}ttawa,
  {ON}, 1997)}, pages 373--381. North-Holland, Amsterdam.

\bibitem[Cs\'{a}ki and F\"{o}ldes, 2000]{Csaki/Foldes:2000}
Cs\'{a}ki, E. and F\"{o}ldes, A. (2000).
  Asymptotic independence and additive functionals.
   \emph{J. Theoret. Probab.}, \textbf{13}(4), 1123--1144.

\bibitem[Dambis, 1965]{Dambis:1965}
Dambis, K.~{\`E}. (1965).
  On decomposition of continuous submartingales.
   \emph{Teor. Verojatnost. i Primenen.}, \textbf{10}, 438--448.

\bibitem[Dobru\v{s}in, 1955]{Dobrushin:1955}
Dobru\v{s}in, R.~L. (1955).
  Two limit theorems for the simplest random walk on a line.
   \emph{Uspehi Mat. Nauk (N.S.)}, \textbf{10}(3(65)), 139--146.

\bibitem[Dubins and Schwarz, 1965]{Dubins/Schwarz:1965}
Dubins, L.~E. and Schwarz, G. (1965).
  On continuous martingales.
   \emph{Proc. Nat. Acad. Sci. U.S.A.}, \textbf{53}, 913--916.

\bibitem[Ethier and Kurtz, 1986]{Ethier/Kurtz:1986}
Ethier, S.~N. and Kurtz, T.~G. (1986).
   \emph{Markov processes: Characterization and convergence}.
  Wiley Series in Probability and Mathematical Statistics: Probability
  and Mathematical Statistics. John Wiley \& Sons Inc., New York.

\bibitem[Hall and Heyde, 1980]{Hall/Heyde:1980}
Hall, P. and Heyde, C.~C. (1980).
   \emph{Martingale limit theory and its application}.
  Probability and Mathematical Statistics. Academic Press.

\bibitem[Ikeda and Watanabe, 1989]{Ikeda/Watanabe:1989}
Ikeda, N. and Watanabe, S. (1989).
   \emph{Stochastic differential equations and diffusion processes},
  volume~24 of  \emph{North-Holland Mathematical Library}.
  North-Holland Publishing Co., Amsterdam, second edition.

\bibitem[Jacod, 2007]{Jacod:2007}
Jacod, J. (2007).
  {Asymptotic properties of power variations of L{\'e}vy processes}.
   \emph{ESAIM: Probability and Statistics}, \textbf{11}, 173--196.

\bibitem[Jacod, 2008]{Jacod:2008}
Jacod, J. (2008).
  Asymptotic properties of realized power variations and related
  functionals of semimartingales.
   \emph{Stochastic Process. Appl.}, \textbf{118}(4), 517--559.

\bibitem[Jacod and Shiryaev, 2003]{Jacod/Shiryaev:2003}
Jacod, J. and Shiryaev, A.~N. (2003).
   \emph{Limit theorems for stochastic processes}, volume 288 of {\em
  Grundlehren der Mathematischen Wissenschaften [Fundamental Principles of
  Mathematical Sciences]}.
  Springer-Verlag, Berlin, second edition.

\bibitem[Johansson, 1994]{Johansson:1994}
Johansson, B. (1994).
  On the central limit theorems for point process martingales.
   \emph{Statist. Probab. Lett.}, \textbf{20}, 125--130.

\bibitem[Lee and Rémillard, 1994]{Lee/Remillard:1994a}
Lee, T.-Y. and Rémillard, B. (1994).
  {Occupation times in systems of null recurrent Markov processes}.
   \emph{Probability Theory and Related Fields}, \textbf{98}(2), 245--259.

\bibitem[Merlev{\`e}de et~al., 2019]{Merlevede/Peligrad/Utev:2019}
Merlev{\`e}de, F., Peligrad, M., and Utev, S. (2019).
  Functional clt for martingale-like nonstationary dependent
  structures.
   \emph{Bernoulli}, \textbf{25}(4B), 3203--3233.

\bibitem[Papanicolaou et~al., 1977]{Papanicolaou/Stroock/Varadhan:1977}
Papanicolaou, G.~C., Stroock, D., and Varadhan, S. R.~S. (1977).
  Martingale approach to some limit theorems.
  In  \emph{Papers from the {D}uke {T}urbulence Conference ({D}uke
  {U}niv., {D}urham, {N}.C., 1976), Paper {N}o. 6}. Duke Univ., Durham, N.C.

\bibitem[Parthasarathy, 1967]{Parthasarathy:1967}
Parthasarathy, K.~R. (1967).
   \emph{Probability measures on metric spaces}.
  Probability and Mathematical Statistics, No. 3. Academic Press, Inc.,
  New York-London.

\bibitem[Rebolledo, 1979]{Rebolledo:1979}
Rebolledo, R. (1979).
   \emph{La méthode des martingales appliquée {\`a} l'étude de la
  convergence en loi de processus}.
  Société mathématique de France.

\bibitem[Rebolledo, 1980]{Rebolledo:1980}
Rebolledo, R. (1980).
  Central limit theorems for local martingales.
   \emph{Z. Wahrsch. Verw. Gebiete}, \textbf{51}(3), 269--286.

\bibitem[Rémillard, 1990]{Remillard:1990}
Rémillard, B. (1990).
  {Asymptotic behaviour of the Laplace transform of weighted occupation
  times of random walks and applications}.
  In  \emph{Diffusion processes and related problems in analysis, Vol. I
  (Evanston, IL, 1989)}, volume~22 of  \emph{Progr. Probab.}, pages 497--519.
  Birkh\"{a}user Boston, Boston, MA.

\end{thebibliography}


\end{document}